\newtheorem{thm}{Theorem}
\newtheorem{lem}{Lemma}
\newtheorem{rem}[lem]{Remark}
\newtheorem{defin}[lem]{Definition}
\newtheorem{cor}[lem]{Corollary}
\newtheorem{example}{Example}
\newtheorem{prop}[lem]{Proposition}
\def \pK{\mathcal{K}}
\def \pP{\mathcal{P}}
\def \pN{\mathcal{N}}
\def \RR{\mathbb{R}}
\def \NN{\mathbb{N}}
\def \Rd{\RR^d}
\def \tp{\tilde{p}}
\def \ap{p^a}
\author{
Tomasz Jakubowski\footnote{The research was partially supported by ANR-09-BLAN-0084-01 and grants MNiSW N N201 397137 and N N201 422539. Fellowship co-financed by European Union within European Social Fund.}, Karol Szczypkowski\\
\scriptsize Institute of Mathematics, Wroc\l{}aw University of Technology, Wyb. Wyspia\'nskiego 27,\\
\scriptsize 50-370 Wroc\l{}aw, Poland \\
\scriptsize tomasz.jakubowski@pwr.wroc.pl, \quad \scriptsize karol.szczypkowski@pwr.wroc.pl}
\title{Estimates of gradient perturbation series}
\date{}
\begin{document}
\maketitle

\begin{abstract}
We give upper and lower bounds of perturbation series for transition densities, corresponding to additive gradient perturbations  satisfying certain space-time integrability conditions.
\end{abstract}
\begin{quote}
 {\em keywords: }transition density, gradient perturbations.\\\\
{\em AMS Subject Classification\/:} 60J35, 47A55, 47D06
\end{quote}

\section{Introduction}

Perturbation series is one of few explicit methods to construct new semigroups and it is widely used in many areas of mathematics and physics. In this paper we study the perturbation series in the context of  gradient perturbations of transition densities on $\RR^d$, $d \in \NN_+$. A function $p \colon \RR\times \RR^d\times\RR\times \RR^d \to [0,\infty)$ is called a transition density if for all $-\infty<s<t<\infty$ and $x,y \in \Rd$ it satisfies the Chapman-Kolmogorov equation,
\begin{align}\label{assume1}
\int_{\Rd} p(s,x,u,z)p(u,z,t,y)\,dz = p(s,x,t,y)\,,\quad \rm{s<u<t}.
\end{align}
The function $p$ may describe the distribution of particles evolving in space and time. We are interested in adding a drift to the picture. Let $b=(b_i)_{i=1}^{d}\colon\RR \times\Rd \to\Rd$ (the drift function). The perturbation series is
\begin{equation}\label{def:tp}
\tp(s,x,t,y)=\sum_{n=0}^{\infty}p_n(s,x,t,y)\,,
\end{equation}
where $p_{0}(s,x,t,y)=p(s,x,t,y)$ and for $n=1,2,...$
\begin{align}
p_n(s,x,t,y)&=\int_s^t \int_{\mathbb{R}^d} p_{n-1}(s,x,u,z)b(u,z)\cdot \nabla_{z}p(u,z,t,y)dzdu\,.
\label{def:p_n}
\end{align}

We will focus on the convergence and estimates of the perturbation series. 
The series (\ref{def:tp}) is motivated by the perturbation theory of semigroup operators. Namely, if we denote by $L$ the generator of the time-inhomogeneous semigroup with the integral density $p$, then, heuristically, $\tp$ stands for the density of the evolution generated by $L + b \cdot \nabla$. This observation and the series (\ref{def:tp}) were used in the study of gradient perturbations of elliptic operators (e.g. \cite{Zhang2}, \cite{MR1783642}) and the fractional Laplacian (\cite{BJ}, \cite{JS}). In such approach the convergence of (\ref{def:tp}) is delicate and therefore suitable conditions on $b$ need to be assumed. The general philosophy is to state the conditions in terms of the density $p$ in such a way that they fit the iteration procedure (\ref{def:p_n}). This lead to the relative Kato conditions for Schr\"odinger perturbations in \cite{BHJ} and \cite{J4}. We note that there exist probabilistic methods to study Schr\"odinger perturbations  based on multiplicative functionals and Khasminski's lemma (strengthened in \cite{BHJ}). However gradient perturbations are more delicate and probabilistic methods (e.g. Girsanov transform) are applicable in special situations.

In the present paper we will consider natural conditions (\ref{def:coN}) and (\ref{def:coP}) analogous to those used in  the case of Schr\"odinger perturbations in \cite{BHJ} and \cite{J4}. 

\begin{defin}\label{def:N}
Let $\eta \ge 0$ and $Q \colon \RR \times \RR \to [0,\infty)$ satisfy 
\begin{equation}\label{eq:Q}
Q(r,u) + Q(u,v) \le Q(r,v)\,, \qquad r<u<v\,.
\end{equation}
We say that $b \in \pN(\eta,Q,p)$ if for all $-\infty < s<t<\infty$ and $x,y \in \Rd$,
\begin{equation}\label{def:coN}
\int_s^t \int_{\Rd} p(s,x,u,z)|b(u,z)||\nabla_z p(u,z,t,y)|\,dz\,du \le \big[ \eta + Q(s,t)\big] p(s,x,t,y)\,.
\end{equation}
\end{defin}

\begin{defin}\label{def:P}
Let $\eta>0$. We will say that $b \in \pP (\eta,p)$ if there exists $h>0$ such that for all $t-s<h$ and $x,y\in \Rd$,
\begin{align}
\int_s^t \int_{\Rd} p(s,x,u,z)|b(u,z)||\nabla_z p(u,z,t,y)|\,dzdu \leq \eta p(s,x,t,y)\,. \label{def:coP}
\end{align}
\end{defin}

If $\eta$ or $Q$ are not specified, by writing $b \in \pN(\eta,Q,p)$ we mean that (\ref{def:coN}) is satisfied for some $\eta$ and $Q$.

As a part of Definition \ref{def:N} and \ref{def:P} we will always make the following assumption on the gradient of $p$: for all $x,y \in \Rd$ and $s<u<t$,
\begin{align}
& \nabla_x p(s,x,t,y) \text{\; exists}, \,\, \text{and} \nonumber\\
& \nabla_x p(s,x,t,y) = \int_{\Rd} \nabla_x p(s,x,u,z)p(u,z,t,y)\, dz \,,\label{assume2}
\end{align}
where the integral is absolutely convergent.
\begin{rem}
\begin{rm}\label{rem:coP}
If $b\in \pP(\eta, p)$, then $b\in \pN (\eta, \beta (t-s),p)$ with $\beta=\frac{\eta}{h}$, where $h$ is taken from Definition \ref{def:P} (see \cite{BHJ}, \cite{JS}). We will generally state our results for the larger class $\pN$, but occasionally more specific results will be given for $\pP$.
\end{rm}
\end{rem}

The main results of the paper are the following two theorems.
\begin{thm}\label{thm:2}
Let $p$ be a function satisfying (\ref{assume1}) and (\ref{assume2}) and $b\in \pN(\eta,Q,p)$ with $\eta<\frac{1}{2}$. Then the perturbations series (\ref{def:tp}) converges absolutely, there exists a constant $C>1$ such that for all $-\infty<s<t<\infty$ and $x,y \in \Rd$,
\begin{equation}\label{eq:thm1}
\frac{p(s,x,t,y)}{C^{1 + Q(s,t)}}  \leq \tp(s,x,t,y) \leq 
p(s,x,t,y) 
\begin{cases}
\left( \frac{1}{1-2\eta}\right)^{1+\frac{Q(s,t)}{\eta}},&  {\rm if} \quad 0<\eta<\frac{1}{2}\,,\\
&\\
e^{Q(s,t)},&  {\rm if}
 \quad \eta=0\,,
\end{cases}
\end{equation}
and the Chapman-Kolmogorov equation holds for $\tp$,
\begin{equation}\label{eq:Ch}
\tp(s,x,t,y)=\int_{\Rd} \tp(s,x,u,z)\tp(u,z,t,y)\,dz\,, \quad u\in (s,t)\,.
\end{equation}
\end{thm}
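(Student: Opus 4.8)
The plan is to control the iterated integrals $p_n$ by an explicit recursion and then sum. First I would establish by induction on $n$ the pointwise bound
\begin{equation}\label{eq:propplan1}
|p_n(s,x,t,y)| \le a_n(s,t)\, p(s,x,t,y)\,,
\end{equation}
where $a_0 \equiv 1$ and the numbers $a_n(s,t)$ are defined by the recursion obtained from plugging \eqref{eq:propplan1} into \eqref{def:p_n} and using \eqref{def:coN}. The induction step uses the Chapman--Kolmogorov structure together with the defining inequality of $\pN(\eta,Q,p)$: assuming $|p_{n-1}(s,x,u,z)| \le a_{n-1}(s,u) p(s,x,u,z)$ and that $a_{n-1}(s,\cdot)$ is, say, nondecreasing, one bounds
\begin{equation}\label{eq:propplan2}
|p_n(s,x,t,y)| \le a_{n-1}(s,t) \int_s^t\!\!\int_{\Rd} p(s,x,u,z)|b(u,z)||\nabla_z p(u,z,t,y)|\,dz\,du \le a_{n-1}(s,t)\big[\eta + Q(s,t)\big] p(s,x,t,y)\,,
\end{equation}
so one may take $a_n(s,t) = a_{n-1}(s,t)\big[\eta+Q(s,t)\big]$, giving $a_n(s,t) = \big[\eta+Q(s,t)\big]^n$ and hence the crude bound $\sum_n |p_n| \le \big(1-\eta-Q(s,t)\big)^{-1} p$. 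This already gives absolute convergence, but only on the time scale where $\eta + Q(s,t) < 1$, which is \emph{not} uniform; the real work is to obtain the stated bound with $Q(s,t)$ appearing only in the exponent.

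To get the sharp upper bound I would instead iterate more carefully, splitting the time interval. The key point is superadditivity \eqref{eq:Q} of $Q$: partition $[s,t]$ into $N$ pieces by points $s=t_0<t_1<\dots<t_N=t$; by expanding $p_n$ as an $n$-fold space-time integral and grouping the $u$-variables according to which subinterval they fall in, one reduces to the estimate on each subinterval where the $\pN$-constant is $\eta + Q(t_{j-1},t_j)$, and \eqref{eq:Q} lets one recombine $\sum_j Q(t_{j-1},t_j) \le Q(s,t)$. Choosing $N$ large enough that each $\eta + Q(t_{j-1},t_j)$ is close to $\eta$ and summing the resulting product series leads, in the limit, to the factor $\left(\tfrac{1}{1-2\eta}\right)^{1 + Q(s,t)/\eta}$ when $\eta>0$ (the exponent $1 + Q(s,t)/\eta$ reflecting roughly $1 + Q(s,t)/(\text{per-step budget})$ steps) and to $e^{Q(s,t)}$ when $\eta = 0$. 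The lower bound $\tp \ge C^{-1-Q(s,t)} p$ follows from the same subinterval decomposition together with the nonnegativity of $p$ and the fact that $\tp \ge p_0 + p_1$ on each small block; on a block where the $\pN$-constant is at most $1/2$ one has $\tp \ge \tfrac12 p$ there, and chaining $O(1 + Q(s,t))$ blocks via Chapman--Kolmogorov gives a constant of the form $C^{-1-Q(s,t)}$. I expect the bookkeeping in this subdivision-and-recombination argument — matching the combinatorics of the $n$-fold integral to the product over subintervals and passing to the limit $N\to\infty$ — to be the main obstacle.

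Once absolute convergence and the bounds are in hand, the Chapman--Kolmogorov equation \eqref{eq:Ch} for $\tp$ is essentially formal. I would show
\begin{equation}\label{eq:propplan3}
p_n(s,x,t,y) = \sum_{k=0}^{n} \int_{\Rd} p_k(s,x,u,z)\, p_{n-k}(u,z,t,y)\,dz
\end{equation}
by induction on $n$: for $n=0$ this is \eqref{assume1}, and the inductive step follows by writing out the definition \eqref{def:p_n} of $p_n$, splitting the time integral at $u$, using \eqref{assume1} and \eqref{assume2} to pass $\int_{\Rd}\cdots dz$ through, and applying the induction hypothesis; Fubini is justified by the absolute convergence already established and the absolute convergence in \eqref{assume2}. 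Summing \eqref{eq:propplan3} over $n\ge 0$ and using the Cauchy product for absolutely convergent series yields \eqref{eq:Ch}. The only subtlety is interchanging the $z$-integration with the infinite sums, which is again handled by the dominated convergence / Tonelli argument using the uniform bound \eqref{eq:propplan1}.
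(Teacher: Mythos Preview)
Your plan is essentially the paper's proof: the identity \eqref{eq:propplan3} is Lemma~\ref{lem:p_n_chap}, and the ``subdivision-and-recombination'' you describe is made precise as Lemma~\ref{lem:comb}, which uses \eqref{eq:propplan3} to show that on a $k$-piece partition with per-piece constant $\theta$ one has $|p_n|\le \binom{n+k-1}{k-1}\theta^n p$, summing to $(1-\theta)^{-k}p$. Setting $\theta=2\eta$ (i.e.\ taking pieces with $Q$-increment $\le\eta$) and $k\approx 1+Q(s,t)/\eta$ gives the stated upper bound; the $\eta=0$ case is the limit $\varepsilon\downarrow 0$ with $\theta=\varepsilon$. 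Chapman--Kolmogorov for $\tp$ then follows from \eqref{eq:propplan3} and the Cauchy product exactly as you say, and the lower bound is obtained by chaining the crude bound $\tp\ge p-\sum_{n\ge1}|p_n|\ge c\,p$ on short blocks through the already-established \eqref{eq:Ch}. (Your ``$\tp\ge p_0+p_1$'' and ``$\tp\ge\frac12 p$ when the constant is at most $1/2$'' are not quite right---one needs the constant strictly below $1/2$ and the resulting factor is $(1-2c)/(1-c)$---but the idea is correct.)

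There is one genuine gap. You assert that one can choose a partition with each $Q(t_{j-1},t_j)$ close to $0$, but for general superadditive $Q$ this can fail: e.g.\ $Q(s,t)=\mathbf{1}_{\{s<0\le t\}}$ satisfies \eqref{eq:Q}, yet \emph{every} partition of an interval containing $0$ has some subinterval with $Q=1$. The paper handles this by first improving \eqref{def:coN} itself (Lemma~\ref{lem:F}): a short limit argument inside the integral, using \eqref{assume1} and \eqref{assume2}, shows that the left side of \eqref{def:coN} is in fact bounded by $[\eta+F(t^-)-F(s^+)]\,p(s,x,t,y)$ with $F(u)=Q(s,u)$. The one-sided limits absorb the jumps of $Q$, and a partition with $F(t_{i+1}^-)-F(t_i^+)\le\varepsilon$ \emph{is} always available for a nondecreasing $F$ (Lemma~\ref{lem:funckja}). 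Without this refinement your subdivision argument only goes through for continuous $Q$; with it, everything you wrote works.
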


Similar results were first obtained in \cite{JS} for the density $p$ of the isotropic $\alpha$-stable process ($1<\alpha<2$). The authors considered drift functions $b$  satisfying the following condition (see also \cite{Zhang2}),
\begin{equation}\label{def:coK}
\int_s^t \int_{\Rd} \left(\frac{p(s,x,u,z)}{(u-s)^{1/\alpha}} + \frac{p(u,z,t,y)}{(t-u)^{1/\alpha}}\right)|b(u,z)|\,dz\,du \le \eta + Q(s,t)\,,
\end{equation}
where $\eta\geq 0$, and $Q$ satisfies (\ref{eq:Q}). 
We note that the condition (\ref{def:coN}) is more natural and general than (\ref{def:coK}), e.g. it allows the drift $|b(u,z)| = p(0,0,u,z)^{(\alpha-1)/d}$  (see \cite[Remark 7, Example 5]{JS}). 

In this paper we propose a new general method which may be applied to various functions $p$. As an example in section 3 we consider the density $p$ of the semigroup generated by $L = \Delta^{\alpha/2}+a^{\beta}\Delta^{\beta/2}$ ($1<\beta<\alpha<2$, $a>0$), and prove that the resulting $\widetilde{p}$ is the density of the semigroup corresponding to $L + b \cdot \nabla$. This result is stated in the following theorem (see section 3 for detailed definitions)

\begin{thm}\label{thm:4}
Let $1<\beta<\alpha<2$ and $p^a(s,x,t,y)$ be the density of the semigroup generated by $\Delta^{\alpha/2} + a^\beta \Delta^{\beta/2}$. If $b\in \pN (\eta,Q, \ap)$ with $\eta<1/2$, then there exists a transition density $\widetilde{\ap}$ satisfying (\ref{eq:thm1}) and such that
\begin{align*}
\int_s^{\infty}\int_{\Rd}\widetilde{\ap}(s,x,u,z) \Big[&\partial_u \phi(u,z) + \left( \Delta^{\alpha/2}_z + a^{\beta} \Delta^{\beta/2}_z \right) \phi(u,z) +\\
&b(u,z) \cdot \nabla_{z}\phi(u,z)\Big]\,dzdu\,= -\phi(s,x)\,,
\end{align*}
where $s\in \RR$, $x \in \Rd$ and $\phi \in C_{c}^{\infty}(\RR \times \Rd )$.
\end{thm}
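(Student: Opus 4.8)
The plan is to derive Theorem~\ref{thm:4} by applying Theorem~\ref{thm:2} to $p=\ap$ and then identifying the perturbation series $\widetilde{\ap}=\sum_{n\ge 0}p_n$ with the fundamental solution of $\partial_t=L+b\cdot\nabla$ in the weak sense, where $L=\Delta^{\alpha/2}+a^\beta\Delta^{\beta/2}$. First I would check that $\ap$ satisfies the structural hypotheses~(\ref{assume1}) and~(\ref{assume2}): the Chapman--Kolmogorov equation is the semigroup property of the evolution generated by $L$, while~(\ref{assume2}) follows from the gradient estimates for $\ap$ established in Section~3, which also license differentiation of the kernel under the integral sign. Since $b\in\pN(\eta,Q,\ap)$ with $\eta<1/2$, Theorem~\ref{thm:2} gives that $\widetilde{\ap}$ converges absolutely, obeys the estimate~(\ref{eq:thm1}) and the Chapman--Kolmogorov equation~(\ref{eq:Ch}). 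Positivity of $\widetilde{\ap}$ is then immediate from the lower bound in~(\ref{eq:thm1}) together with $\ap>0$, so $\widetilde{\ap}$ is a transition density; integrating the recursion~(\ref{def:p_n}) in $y$ and using conservativeness of $\ap$ moreover yields $\int_{\Rd}p_n(s,x,t,y)\,dy=0$ for $n\ge1$, hence $\int_{\Rd}\widetilde{\ap}(s,x,t,y)\,dy=1$.

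The analytic input is the weak Kolmogorov equation for $\ap$: for all $s\in\RR$, $x\in\Rd$ and $\phi\in C_c^\infty(\RR\times\Rd)$,
\begin{equation*}
\int_s^\infty\int_{\Rd}\ap(s,x,u,z)\big[\partial_u\phi(u,z)+L_z\phi(u,z)\big]\,dz\,du=-\phi(s,x)\,,
\end{equation*}
which expresses that $\ap$ is the fundamental solution of $\partial_t=L$ (recalled in Section~3). Differentiating this identity in the initial point $x$ and passing $\nabla_x$ under the integral — justified by the estimate $|\nabla_x\ap(s,x,t,y)|\le c\,(t-s)^{-1/\alpha}\ap(s,x,t,y)$, the bound $\int_{\Rd}\ap\le1$, the restriction $\alpha>1$ (so that $(u-s)^{-1/\alpha}$ is locally integrable in $u$), and the compact support of $\phi$ — one obtains the companion identity
\begin{equation*}
\int_s^\infty\int_{\Rd}\nabla_x\ap(s,x,u,z)\big[\partial_u\phi(u,z)+L_z\phi(u,z)\big]\,dz\,du=-\nabla_x\phi(s,x)\,.
\end{equation*}

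Writing $A\phi$ for $\partial_u\phi+L_z\phi$, the core of the argument is the per-term identity: for each $n\ge1$,
\begin{equation*}
\int_s^\infty\int_{\Rd}p_n(s,x,u,z)\,A\phi(u,z)\,dz\,du=-\int_s^\infty\int_{\Rd}p_{n-1}(s,x,u,z)\,b(u,z)\cdot\nabla_z\phi(u,z)\,dz\,du\,.
\end{equation*}
To prove it, substitute the recursion~(\ref{def:p_n}) for $p_n$, apply Fubini's theorem (legitimate by $b\in\pN(\eta,Q,\ap)$, the kernel and gradient estimates, and the compactness of $\supp\phi$) to bring the $(r,w)$-integration of $p_{n-1}(s,x,r,w)\,b(r,w)$ outside, and evaluate the resulting inner integral $\int_r^\infty\int_{\Rd}\nabla_w\ap(r,w,u,z)\,A\phi(u,z)\,dz\,du=-\nabla_w\phi(r,w)$ via the companion identity with initial point $(r,w)$. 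Summing the per-term identities over $n\ge1$, adding the base case $n=0$, reindexing the drift sum, and interchanging summation with integration (permitted because $\sum_{n\ge0}\int_s^\infty\int_{\Rd}|p_n|\,|A\phi+b\cdot\nabla_z\phi|<\infty$ on $\supp\phi$, thanks to the bounds on $\sum_{n\ge0}|p_n|$ from the proof of Theorem~\ref{thm:2}) gives
\begin{equation*}
\int_s^\infty\int_{\Rd}\widetilde{\ap}(s,x,u,z)\big[A\phi(u,z)+b(u,z)\cdot\nabla_z\phi(u,z)\big]\,dz\,du=-\phi(s,x)\,,
\end{equation*}
which is the assertion.

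I expect the principal obstacle to be the justification of the two interchanges with integration — differentiating the weak Kolmogorov equation under the integral sign, and the repeated Fubini steps together with the exchange of sum and integral in the final display. Both rely on quantitative control of $\ap$ and $\nabla_x\ap$: domination of $\ap$ by, and two-sided comparability with, the isotropic $\alpha$-stable kernel, and the gradient bound $|\nabla_x\ap(s,x,t,y)|\le c\,(t-s)^{-1/\alpha}\ap(s,x,t,y)$, which will be supplied in Section~3. Combined with $b\in\pN(\eta,Q,\ap)$ and the compactness of $\supp\phi$, these estimates provide the absolute convergence and the dominating functions needed throughout; verifying that they are strong enough in the borderline regime near the diagonal $u\to s$ (where $1/\alpha<1$ is exactly what saves us, since $\alpha>1$) is the delicate point.
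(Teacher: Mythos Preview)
Your outline is essentially the argument the paper has in mind: the paper's own proof of Theorem~\ref{thm:4} consists of checking that differentiation under the integral sign is legitimate for $\psi$ dominated by $\ap(\cdot,\cdot,t_0,y_0)$ (via Lemma~\ref{lem:Gp} and~(\ref{approx:ap})), observing that $L\phi$ is such a $\psi$, and then deferring to the proof of Theorem~1 in~\cite{JS}, which is precisely the per-term telescoping you describe.

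Two points, however, deserve correction. First, your claim that $\ap$ is two-sided comparable with the isotropic $\alpha$-stable kernel is false: by~(\ref{approx:ap}) the large-$|x|$ tail of $\ap$ is of order $t|x|^{-(d+\beta)}$, strictly heavier than the $\alpha$-stable tail, so neither domination nor comparability holds globally. Fortunately you do not actually need this; the ingredients you use elsewhere (the gradient bound $|\nabla_x\ap|\le C\widehat{\ap}\le C(t-s)^{-1/\alpha}\ap$ from Lemma~\ref{lem:Gp}, conservativeness, and $1/\alpha<1$) suffice. Second, and more substantively, the finiteness of $\int_s^\infty\int_{\Rd}\ap(s,x,u,z)\,|b(u,z)|\,|\nabla_z\phi(u,z)|\,dz\,du$ is not immediate from $b\in\pN$ and compact support alone: the $\pN$ condition controls $\ap\,|b|\,|\nabla_z\ap|$, not $\ap\,|b|$, and one cannot dominate $|\nabla_z\phi|$ by a single $|\nabla_z\ap(u,z,t_0,y_0)|$ because the latter vanishes at $z=y_0$. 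The standard fix (implicit in~\cite{JS}) is to pick two points $y_1\ne y_2$ outside the spatial support of $\phi$ and use that $|\nabla_z\ap(u,z,t_0,y_1)|+|\nabla_z\ap(u,z,t_0,y_2)|$ is bounded below on $\supp\phi$ (by~(\ref{eq:grad}) the gradient vanishes only at $z=y_j$), whence $|\nabla_z\phi|\le c\sum_j|\nabla_z\ap(\cdot,\cdot,t_0,y_j)|$ and the $\pN$ bound applies. With these adjustments your argument goes through and matches the paper's.
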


\section{Proofs}
Throughout this section we assume that $\eta\geq 0$ and $Q$ is a function satisfying (\ref{eq:Q}). The following lemma is taken from \cite{JS} (see also \cite{BHJ}). For convenience of the reader we attach the proof.
\begin{lem}\label{lem:p_n_chap}
Let $b\in \pN (\eta,Q,p)$.
For all $s<u<t$, $x,y\in\Rd$ and $n=0,1,2,\ldots$, we have
\begin{equation}\label{eq:p_n_chap}
\sum_{m=0}^{n} \int_{\mathbb{R}^d}p_{m}(s,x,u,z)p_{n-m}(u,z,t,y)dz=p_{n}(s,x,t,y)\,.
\end{equation}
\end{lem}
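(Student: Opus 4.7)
I would prove the lemma by induction on $n$, splitting the time integral defining $p_n$ at the intermediate time $u$ and applying the induction hypothesis on each piece.

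The base case $n=0$ is just the Chapman--Kolmogorov equation~(\ref{assume1}) for $p=p_0$. For the inductive step, I would start from the definition
\begin{align*}
p_n(s,x,t,y)=\int_s^t\!\!\int_{\Rd} p_{n-1}(s,x,v,w)\,b(v,w)\cdot\nabla_w p(v,w,t,y)\,dw\,dv
\end{align*}
and split the outer integral into $v\in(s,u)$ and $v\in(u,t)$. On the piece $v\in(u,t)$, I would expand $p_{n-1}(s,x,v,w)$ using the induction hypothesis applied at the intermediate time $u$, writing
\[
p_{n-1}(s,x,v,w)=\sum_{m=0}^{n-1}\int_{\Rd} p_m(s,x,u,z)\,p_{n-1-m}(u,z,v,w)\,dz.
\]
Inserting this and swapping the $z$-integral to the outside (by Fubini, using the nonnegativity of the integrand made up of $|b|$, $|\nabla p|$, etc., and the finiteness guaranteed by $b\in\pN(\eta,Q,p)$), the bracketed expression in $(z)$ is exactly the definition of $p_{n-m}(u,z,t,y)$. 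This contributes $\sum_{m=0}^{n-1}\int p_m(s,x,u,z)p_{n-m}(u,z,t,y)\,dz$.

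On the piece $v\in(s,u)$, I would instead expand $\nabla_w p(v,w,t,y)$ using assumption~(\ref{assume2}), namely
\[
\nabla_w p(v,w,t,y)=\int_{\Rd}\nabla_w p(v,w,u,z)\,p(u,z,t,y)\,dz,
\]
and again swap the $z$-integral outside. The remaining integral in $(v,w)$ against $p_{n-1}(s,x,v,w)\,b(v,w)$ is precisely the definition of $p_n(s,x,u,z)$ (with endpoint $u$ replacing $t$ and $z$ replacing $y$), so this piece yields $\int p_n(s,x,u,z)\,p_0(u,z,t,y)\,dz$, which is the missing $m=n$ term. Summing the two pieces produces the required identity.

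The main technical obstacle is the justification of Fubini's theorem in both swaps, since the drift term $b\cdot\nabla p$ has no sign. However, the absolute convergence needed is exactly what the Kato-type bound~(\ref{def:coN}) provides: iterating it controls the total absolute integral of $p_{n-1}(s,x,v,w)|b(v,w)||\nabla_w p(v,w,t,y)|$ against either of the inserted factors, and the absolute convergence of the integral in~(\ref{assume2}) is stipulated as part of the hypothesis. Once Fubini is justified, the argument is purely algebraic rearrangement.
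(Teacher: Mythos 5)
Your proof is correct and follows essentially the same route as the paper's: both split the time interval at $u$, use the induction hypothesis on the $(u,t)$-piece and the identity~(\ref{assume2}) on the $(s,u)$-piece, and justify the rearrangements via Fubini with absolute convergence coming from~(\ref{def:coN}). The only cosmetic difference is the direction of the argument: you decompose the integral defining $p_n(s,x,t,y)$ and show the two pieces reassemble into the sum, while the paper decomposes the sum (treating the $m=n$ term separately) and shows the two parts reassemble into the integral.
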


\begin{proof}
By (\ref{assume1}), (\ref{eq:p_n_chap}) is true for $n=0$. Let $n\ge 1$ and assume that (\ref{eq:p_n_chap}) holds for $n-1$. By Fubini's theorem, (\ref{assume1}) and (\ref{assume2}), the last term in the above sum equals
\begin{align*}
&\int_{\mathbb{R}^d} p_{n}(s,x,u,z)p_{0}(u,z,t,y)\,dz \\
= &\int_{\mathbb{R}^d} \int_s^u\int_{\mathbb{R}^d}p_{n-1}(s,x,r,w)b(r,w)\cdot \nabla_{w}p(r,w,u,z)dwdr\ p(u,z,t,y)dz \\
= &\int_s^u\int_{\mathbb{R}^d}p_{n-1}(s,x,r,w)b(r,w)\cdot \nabla_{w}p(r,w,t,y)dwdr\,.
\end{align*}
\noindent
Furthermore,
\begin{align*}
&\sum_{m=0}^{n-1}\int_{\mathbb{R}^d}p_{m}(s,x,u,z)p_{n-m}(u,z,t,y)dz \\
= &\sum_{m=0}^{n-1}\int_{\mathbb{R}^d}p_{m}(s,x,u,z)\int_u^t\int_{\mathbb{R}^d}
p_{n-1-m}(u,z,r,w)b(r,w)\cdot \nabla_{w}p(r,w,t,y)dwdr\,dz \\
= &\int_u^t\int_{\mathbb{R}^d}\Bigg(\sum_{m=0}^{n-1}\int_{\mathbb{R}^d}p_{m}(s,x,u,z)
p_{n-1-m}(u,z,r,w)dz\Bigg)b(r,w)\cdot \nabla_{w}p(r,w,t,y)dwdr \\
= &\int_u^t\int_{\mathbb{R}^d}p_{n-1}(s,x,r,w)b(r,w)\cdot \nabla_{w}p(r,w,t,y)dwdr\,.
\end{align*}
The two observations yield (\ref{eq:p_n_chap}).
\end{proof}

The following lemma is crucial in our consideration. It will allow us to sum the series (\ref{def:tp}) regardless of the smallness of $Q(s,t)$.
\begin{lem}\label{lem:comb}
Let $s<t$, $k\in \mathbb{N}_+$, $\theta >0$, and $s=t_0<t_1<\ldots <t_k=t$, be such that for all $i= 0,1,\ldots,k-1 $ and $x,y\in \Rd$
\begin{equation}\label{eq:comb}
\int_{t_i}^{t_{i+1}} \int_{\Rd} p(t_i,x,u,z)|b(u,z)||\nabla_z p(u,z,t_{i+1},y)|\,dzdu \leq \theta\, p(t_i,x,t_{i+1},y)\,.
\end{equation}
Then for all $x,y\in\Rd$,
\begin{align*}
|p_n(s,x,t,y)|\leq \binom{n+k-1}{k-1} \theta^n p(s,x,t,y)\,.
\end{align*} 
\end{lem}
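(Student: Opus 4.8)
The plan is to prove this by induction on $n$, the order of the perturbation term, using the Chapman--Kolmogorov-type decomposition of $p_n$ from Lemma \ref{lem:p_n_chap} to split the time interval at the grid points $t_1,\ldots,t_{k-1}$. First I would record the base case: for $n=0$ we have $p_0=p$ and $\binom{k-1}{k-1}\theta^0=1$, so the bound is trivial. For the inductive step the key idea is to iterate (\ref{eq:p_n_chap}) to write $p_n(s,x,t,y)$ as a sum over compositions $n_1+n_2+\cdots+n_k=n$, with $n_i\ge 0$, of integrals of products $p_{n_1}(t_0,\cdot,t_1,\cdot)\,p_{n_2}(t_1,\cdot,t_2,\cdot)\cdots p_{n_k}(t_{k-1},\cdot,t_k,\cdot)$ over the intermediate spatial variables. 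Then I would bound each factor $|p_{n_i}(t_{i-1},x_{i-1},t_i,x_i)|$ separately.

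The heart of the argument is therefore a one-step estimate on a single subinterval $[t_i,t_{i+1}]$: I claim that the hypothesis (\ref{eq:comb}), which is exactly the defining inequality of $\pN$ with $\eta$ replaced by $\theta$ and $Q\equiv 0$ on that subinterval, gives $|p_m(t_i,x,t_{i+1},y)|\le \theta^m p(t_i,x,t_{i+1},y)$ for every $m\ge 0$. This is proved by a short induction on $m$: assuming it for $m-1$, we have from (\ref{def:p_n}) that
\begin{align*}
|p_m(t_i,x,t_{i+1},y)| &\le \int_{t_i}^{t_{i+1}}\!\!\int_{\Rd} |p_{m-1}(t_i,x,u,z)|\,|b(u,z)|\,|\nabla_z p(u,z,t_{i+1},y)|\,dzdu\\
&\le \theta^{m-1}\int_{t_i}^{t_{i+1}}\!\!\int_{\Rd} p(t_i,x,u,z)\,|b(u,z)|\,|\nabla_z p(u,z,t_{i+1},y)|\,dzdu\\
&\le \theta^{m-1}\cdot\theta\, p(t_i,x,t_{i+1},y)\,,
\end{align*}
using (\ref{eq:comb}) in the last line. (Here one needs that $p_{m-1}$ itself admits an iteration formula on the subinterval; this follows because $p_{m-1}(t_i,x,u,z)$ for $u\le t_{i+1}$ only involves the restriction of the dynamics to $[t_i,t_{i+1}]$, so the one-step bound applies verbatim to it.)

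Combining the two inductions: plug the one-step bounds $|p_{n_i}(t_{i-1},x_{i-1},t_i,x_i)|\le \theta^{n_i} p(t_{i-1},x_{i-1},t_i,x_i)$ into the compositional decomposition, collect $\theta^{n_1+\cdots+n_k}=\theta^n$, and use the Chapman--Kolmogorov equation (\ref{assume1}) repeatedly to integrate out the intermediate spatial variables, turning $\int\prod_i p(t_{i-1},x_{i-1},t_i,x_i)\,dx_1\cdots dx_{k-1}$ back into $p(s,x,t,y)$. What remains is to count the compositions: the number of $k$-tuples $(n_1,\ldots,n_k)$ of nonnegative integers summing to $n$ is $\binom{n+k-1}{k-1}$, which gives exactly the stated constant. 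I expect the main obstacle to be bookkeeping rather than conceptual: namely making the iterated application of Lemma \ref{lem:p_n_chap} rigorous (an auxiliary induction on $k$ showing $p_n(s,x,t,y)=\sum_{n_1+\cdots+n_k=n}\int p_{n_1}\cdots p_{n_k}$), and justifying that the one-step bound for $p_m$ on a subinterval is legitimate, i.e. that the partial sums defining $p_m$ restricted to $[t_i,t_{i+1}]$ interact correctly with Fubini; both are routine but must be stated carefully to keep all the integrals absolutely convergent.
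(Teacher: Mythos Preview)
Your argument is correct and rests on the same two ingredients as the paper's proof: the one-step estimate $|p_m(t_i,x,t_{i+1},y)|\le\theta^m p(t_i,x,t_{i+1},y)$ on each subinterval, and the decomposition of $p_n$ across the grid via Lemma~\ref{lem:p_n_chap}. The only difference is organizational: the paper runs an induction on $k$ rather than on $n$, splitting off just the first subinterval $[t_0,t_1]$ at each step and invoking the identity $\sum_{m=0}^n\binom{n-m+k-2}{k-2}=\binom{n+k-1}{k-1}$, whereas you expand all $k$ subintervals at once and count compositions directly. The paper's version is a bit shorter because it avoids having to state and justify the full $k$-fold iterated form of Lemma~\ref{lem:p_n_chap}; your version makes the origin of the binomial coefficient more transparent.

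One small remark on your parenthetical: to pass from the bound on $|p_{m-1}(t_i,x,t_{i+1},y)|$ to the bound on $|p_{m-1}(t_i,x,u,z)|$ for intermediate $u\in(t_i,t_{i+1})$, you are implicitly using that (\ref{eq:comb}) holds on every sub-subinterval $[t_i,u]$, not just on $[t_i,t_{i+1}]$ itself. The paper's $k=1$ base case relies on the same point. In the actual application (proof of Theorem~\ref{thm:2}) this is supplied by Lemma~\ref{lem:F}, since $F(u^-)-F(t_i^+)\le F(t_{i+1}^-)-F(t_i^+)$ by monotonicity; you should note this explicitly rather than leave it as ``follows because $p_{m-1}$ only involves the restriction of the dynamics''.
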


\begin{proof}
For $k=1$ the inequality is true for every $n$ by the definition of $p_n$ and induction in $n$. If $k>1$ and the statement is true for $k-1$ and all $n$, then for every $n\in \mathbb{N}$, by Lemma \ref{lem:p_n_chap} we obtain
$$
|p_n(s,x,t,y)|\leq \sum_{m=0}^n \int_{\Rd} |p_m(s,x,t_1,z)| |p_{n-m}(t_1,z,t,y)|\,dz
$$
$$
\leq \sum_{m=0}^n \int_{\Rd} \theta^m p(s,x,t_1,z) \binom{n-m+k-2}{k-2}\theta^{n-m}p(t_1,z,t,y)\,dz
$$
$$
\leq \sum_{m=0}^{n} \binom{n-m+k-2}{k-2} \theta^n p(s,x,t,y)
$$
$$
= \binom{n+k-1}{k-1} \theta^n p(s,x,t,y)\,.
$$
\end{proof}

We note that the function $Q$ may be discontinuous. If $Q$ has a jump bigger than $\theta$ then (\ref{eq:comb}) does not hold for any partition of the interval $(s,t)$. We will overcome this problem by replacing $Q(s,t)$ in (\ref{def:coN}) by $\lim_{h \to 0} Q(s_0,t-h)- Q(s_0,s+h)$ for some $s_0 \le s$. 
We will write, as usual,
\begin{align*}
F(t^-)=\lim_{u \uparrow t} F(u)\,, \qquad F(s^+)=\lim_{u \downarrow s} F(u)\,.
\end{align*}

\begin{lem} \label{lem:F}
Let $b\in \pN (\eta,Q,p)$ and $s_0\in\RR$.  Define $F(u)=Q(s_0,u)$ if $u>s_0$ and $F(u)=0$ if $u\leq s_0$. Then for all $s_0\leq s<t<\infty$ and $x,y\in\Rd$,
$$
\int_s^t \int_{\Rd} p(s,x,u,z)|b(u,z)||\nabla_z p(u,z,t,y)|\, dzdu \leq \left( \eta + F(t^-)-F(s^+) \right) p(s,x,t,y)\,.
$$
\end{lem}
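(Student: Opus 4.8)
The goal is a localized version of the $\pN$ condition in which the constant $\eta + Q(s,t)$ is replaced by the smaller quantity $\eta + F(t^-)-F(s^+)$. The idea is to approximate the interval $(s,t)$ from the inside by $(s',t')$ with $s<s'<t'<t$, apply the hypothesis $b\in\pN(\eta,Q,p)$ there (where the bound is governed by $Q(s',t')$), and then pass to the limit $s'\downarrow s$, $t'\uparrow t$. The key point is that $Q(s',t') \le Q(s_0,t') - Q(s_0,s')$ by the superadditivity property (\ref{eq:Q}) applied to $s_0 \le s < s' < t'$ (when $s_0 < s'$; the case $s_0 = s$ is immediate), and $Q(s_0,t')-Q(s_0,s') = F(t') - F(s')$, which tends to $F(t^-)-F(s^+)$ as $t'\uparrow t$, $s'\downarrow s$.

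First I would fix $s_0 \le s < t$ and $x,y\in\Rd$, and take sequences $s'_j \downarrow s$, $t'_j \uparrow t$ with $s < s'_j < t'_j < t$. Writing the hypothesis (\ref{def:coN}) on $(s',t')$:
\begin{equation*}
\int_{s'}^{t'} \int_{\Rd} p(s',x,u,z)|b(u,z)||\nabla_z p(u,z,t',y)|\,dz\,du \le \big(\eta + Q(s',t')\big)\, p(s',x,t',y)\,,
\end{equation*}
and bounding $Q(s',t') \le F(t') - F(s')$ as above. The left side I would like to relate to the integral over $(s,t)$ against $p(s,x,\cdot,\cdot)$ and $|\nabla p(\cdot,\cdot,t,y)|$. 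This is the step I expect to be the main obstacle: one must pass from the inner interval back to the full interval on the left-hand side. The natural tool is (\ref{assume1}) and (\ref{assume2}): for $s<s'<u$, $p(s',x,u,z)$ is not directly $p(s,x,u,z)$, but one can insert $\int p(s,x,s',w)\,dw$-type expansions, or, more cleanly, restrict attention first to the inner integral and use Fatou's lemma. Concretely, I would fix $s$ and $t$ on the outer ends and only shrink one side at a time; e.g., keep $s$ and let $t'\uparrow t$ first. For the right-hand side, continuity is not assumed, so I would instead argue via monotone/Fatou-type limits: the integrand on $(s,t)$ is the increasing (as $s'\downarrow s$, $t'\uparrow t$) limit—or at least the liminf is controlled—of the integrands on $(s'_j,t'_j)$ after the Chapman–Kolmogorov adjustments.

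A cleaner route, which I would pursue, is to avoid changing the base point of $p$ on the left entirely: apply the hypothesis on $(s, t')$ with $t'\uparrow t$ (shrinking only the top), getting
\begin{equation*}
\int_{s}^{t'} \int_{\Rd} p(s,x,u,z)|b(u,z)||\nabla_z p(u,z,t',y)|\,dz\,du \le \big(\eta + F(t'^-) - F(s^+)\text{-type bound}\big)\, p(s,x,t',y)\,.
\end{equation*}
Wait — here $Q(s,t')$ need not be bounded by $F(t') - F(s^+)$ since $F(s)$ may exceed $\lim_{u\downarrow s}Q(s_0,u)$; so one genuinely needs to shrink the bottom as well. Thus I would do both: apply (\ref{def:coN}) on $(s'_j, t'_j)$, use $Q(s'_j,t'_j)\le F(t'_j)-F(s'_j)$, then on the left-hand side use (\ref{assume1}) to write $p(s'_j,x,u,z)\le$ ... — actually since all quantities are nonnegative and $p$ satisfies Chapman–Kolmogorov exactly, I would integrate the inequality on $(s'_j,t'_j)$ against $p(s,x,s'_j,\cdot)$ in the $x$-variable and against $p(t'_j,\cdot,t,y)$ in the $y$-variable, then use (\ref{assume1}) on the left outside the $dz\,du$ integral and (\ref{assume2}) to reassemble $\nabla_z p(u,z,t,y)$; this produces
\begin{equation*}
\int_{s'_j}^{t'_j} \int_{\Rd} p(s,x,u,z)|b(u,z)||\nabla_z p(u,z,t,y)|\,dz\,du \le \big(\eta + F(t'_j)-F(s'_j)\big)\, p(s,x,t,y)\,.
\end{equation*}
Finally, let $j\to\infty$: the left side increases to the integral over $(s,t)$ by monotone convergence (the integrand is nonnegative and the domains increase to $(s,t)$), and the right side converges to $\big(\eta + F(t^-)-F(s^+)\big)p(s,x,t,y)$ by definition of the one-sided limits. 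This yields the claim; the case where some of these one-sided limits are not finite is handled automatically since the inequality is then vacuous or follows by the same monotone limit.
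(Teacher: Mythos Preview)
Your proposal is correct and follows essentially the same route as the paper: shrink $(s,t)$ to an inner interval, integrate the $\pN$ inequality on that interval against the Chapman--Kolmogorov kernels $p(s,x,s',\cdot)$ and $p(t',\cdot,t,y)$ (using (\ref{assume1}) and (\ref{assume2}) together with the triangle inequality for the gradient term), bound $Q(s',t')\le F(t')-F(s')$ via (\ref{eq:Q}), and pass to the limit by monotone convergence. The paper carries out exactly this argument with the symmetric choice $s'=s+h$, $t'=t-h$ and $h\to 0^+$.
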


\begin{proof}
For all $s_0\leq s<t<\infty$ and $x,y\in \Rd$ we have
\begin{align*}
\int_s^t \int_{\Rd} p(s,x,&u,z)|b(u,z)||\nabla_z p(u,z,t,y)|\, dzdu \\
= &\lim_{h\to 0^+}\int_{s+h}^{t-h} \int_{\Rd} p(s,x,u,z)|b(u,z)||\nabla_z p(u,z,t,y)|\, dzdu \\
\leq &\limsup_{h\to 0^+} \int_{\Rd} \int_{\Rd} p(s,x,s+h,w_2) \int_{s+h}^{t-h} \int_{\Rd} p(s+h,w_2,u,z) |b(u,z)| \\
& \qquad \qquad |\nabla_z p(u,z,t-h,w_2)|\,dzdu\, p(t-h,w_1,t,y)\, dw_1 dw_2\\
\leq &\lim_{h \to 0^+} \big[ \eta +Q(s+h,t-h) \big] p(s,x,t,y)\\
\leq &\lim_{h\to 0^+}\big[ \eta + F(t-h)-F(s+h) \big] \, p(s,x,t,y)\\
= & \big[ \eta + F(t^-)-F(s^+)\big] \, p(s,x,t,y)\,,
\end{align*}
because $Q(s+h,t-h)\leq Q(s_0,t-h)-Q(s_0,s+h)$.
\end{proof}

\begin{lem}\label{lem:funckja}
Let $F\colon \RR \to [0,\infty)$ be non-decreasing. Let $\theta>0$, $s<t$ and $k\in \mathbb{N}_+$ be such that $F(t^-)-F(s^+)\leq k\theta$. Then there are $m \in \mathbb{N}_+$, $m\leq k$, $s=t_0< t_1 <\ldots < t_m =t$ such that $F(t_{i+1}^-)-F(t_i^+) \leq \theta$ for $i= 0,\ldots ,m-1$. 
\end{lem}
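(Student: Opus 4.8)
The plan is to build the partition greedily from the left, jumping as far to the right as possible at each step while keeping each increment of $F$ (measured from right-limit to left-limit) at most $\theta$. Concretely, for a fixed $\theta$ and starting point $a\in[s,t)$, define the candidate next point as
\[
g(a)=\sup\{v\in(a,t]\colon F(v^-)-F(a^+)\le \theta\}\,.
\]
Since $F$ is non-decreasing and $F(s^+)$ is finite (because $F(t^-)-F(s^+)\le k\theta<\infty$), this set contains points near $a$, so $g(a)>a$ is well defined. The key monotonicity fact I would record first is that $F(g(a)^-)-F(a^+)\le\theta$: indeed, for any increasing sequence $v_j\uparrow g(a)$ with $F(v_j^-)-F(a^+)\le\theta$ we have $F(v_j^-)\le F(a^+)+\theta$, and $F(v_j^-)\to F(g(a)^-)$ by monotonicity, so the limit inherits the bound. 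Thus the greedy jump is always admissible as a partition increment.

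Next I would iterate: set $t_0=s$ and $t_{i+1}=g(t_i)$ as long as $t_i<t$, and argue this process reaches $t$ in at most $k$ steps. The heart of the argument is a lower bound on how much $F$ must increase over two consecutive greedy steps, unless the second step already lands at $t$. If $t_{i+2}=g(t_{i+1})<t$, then by maximality of $g(t_{i+1})$, for every $v>t_{i+2}$ (in particular for $v=t_{i+2}+\varepsilon$, or just using right-continuity-type limits carefully) we must have $F(v^-)-F(t_{i+1}^+)>\theta$; letting $v\downarrow t_{i+2}$ gives $F(t_{i+2}^+)-F(t_{i+1}^+)\ge \theta$. Hence over the block from $t_i$ to $t_{i+2}$ the quantity $F(t_{i+2}^+)-F(t_i^+)\ge\theta$. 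Summing these block estimates shows that if the process has not yet reached $t$ after $m$ steps, then $F(t_m^+)-F(s^+)\ge \lfloor m/2\rfloor\,\theta$ — or, more carefully tracking, $F(t_m^-)-F(s^+)\ge (m-1)\theta$, which combined with $F(t^-)-F(s^+)\le k\theta$ forces $m\le k+1$ and a small extra argument trims this to $m\le k$.

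Actually the cleanest bookkeeping is to note that each step except possibly the last strictly increases $F(\cdot^+)$ by at least... no — consecutive single steps can each add arbitrarily little, which is exactly why I pair them. A slightly more robust variant: observe that $F(t_{i+1}^+)\ge F(t_{i+1}^-)$ and that if $t_{i+2}<t$ then $F(t_{i+2}^-)>F(t_i^+)+\theta$ is false in general, so the paired estimate above is really the right tool. I would phrase the final counting as: let $N$ be the number of steps; pairing steps $(0,1),(2,3),\ldots$, each complete pair not ending the process contributes at least $\theta$ to $F(t_\cdot^+)-F(s^+)$, and since the total is at most $k\theta$, there are at most $k$ complete pairs, giving $N\le 2k$; then a sharper look—using that a single step to a point $<t$ already forces $F(t_{i+1}^+)-F(t_i^+)\ge 0$ and combining with the endpoint—yields $N\le k$. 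The constant-chasing to get exactly $m\le k$ rather than $m\le 2k$ is where I expect the only real friction; one resolves it by being careful that the last admissible jump lands exactly on $t$ as soon as the remaining increment $F(t^-)-F(t_i^+)\le\theta$, which happens no later than after $k$ steps because each earlier step used up a full $\theta$ worth of increase in the relevant sense.

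The main obstacle, then, is not the existence of the greedy construction but the precise termination count: showing the number of pieces is at most $k$ (not merely finite, and not merely $2k$). The trick is to maintain the invariant that after $i$ steps with $t_i<t$ one has $F(t_i^-)-F(s^+)\ge (i-1)\theta$ (equivalently, each step after the first consumes a clean $\theta$), which follows from the maximality property $F(t_i^-)>F(t_{i-1}^+)+\theta$ whenever $t_i<t$ was reached as a proper greedy jump rather than being capped at $t$. Feeding this invariant into $F(t^-)-F(s^+)\le k\theta$ gives $i-1<k$, i.e. $i\le k$, for the last index with $t_i<t$, so $t_{i+1}=t$ and $m=i+1\le k$ (and if ever equality issues arise we simply relabel, since $m\le k$ is all that is claimed and we may take fewer points). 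Throughout, the only analytic input needed is monotonicity of $F$ and the elementary fact that one-sided limits of a monotone function exist and respect inequalities passed to the limit.
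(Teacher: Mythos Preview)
Your greedy construction is a legitimate route, different from the paper's, but your final paragraph contains a genuine error that breaks the counting. You claim that when $t_i=g(t_{i-1})<t$ one has $F(t_i^-)>F(t_{i-1}^+)+\theta$; this is false --- in fact the admissibility of the greedy jump gives exactly the opposite inequality $F(t_i^-)\le F(t_{i-1}^+)+\theta$. What maximality of $g$ actually yields (and what you correctly derived in your second paragraph) is $F(t_i^+)-F(t_{i-1}^+)\ge\theta$, with the \emph{right} limit at $t_i$. Telescoping this gives $F(t_i^+)-F(s^+)\ge i\theta$ whenever $t_i<t$; since $t_i<t$ also forces $F(t_i^+)\le F(t^-)$, you get $i\theta\le k\theta$. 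In particular if $t_{k-1}<t$ then $F(t^-)-F(t_{k-1}^+)\le\theta$, so the next greedy jump lands at $t$ and $m\le k$. Your invariant with $F(t_i^-)$ only gives $i-1\le k$, and the strict inequality you assert to squeeze out ``$i-1<k$'' is simply not available. So the plan is salvageable, but not as written.

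For comparison, the paper avoids the termination bookkeeping entirely by taking an \emph{absolute} rather than relative construction: with $l\le k$ minimal so that $F(t^-)-F(s^+)\le l\theta$, set $r_i=\sup\{u\in(s,t):F(u)-F(s^+)\le i\theta\}$ for $1\le i\le l-1$, together with $r_0=s$, $r_l=t$. Then $F(r_{i+1}^-)-F(s^+)\le(i+1)\theta$ and $F(r_i^+)-F(s^+)\ge i\theta$ give the increment bound in one line, and the number of pieces is $\le l\le k$ by construction (after discarding repeated points). This sidesteps the whole ``how many greedy steps'' issue; your approach works too once the invariant is fixed, but it is strictly more laborious.
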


\begin{proof}
Let $l\in\mathbb{N}_+$ be the smallest number such that $F(t^-)-F(s^+)\leq l\theta$. If $l=1$ we take $t_0 =s$, $t_1 =t$. Otherwise, we define $r_0=s$, $r_{l} =t$ and $r_i = \sup \{ u\in (s,t) \colon F(u)-F(s^+)\leq i \theta \}$ for $i = 1,\ldots,l-1$. We note that $F(r_{i+1}^-)-F(s^+)\leq (i+1)\theta$ and $F(r_i^+)-F(s^+)\geq i\theta$ for $i= 0,\ldots ,l-1$, hence 
\begin{align*}
F(r_{i+1}^-)-F(r_i^+) &= \left(F(r_{i+1}^-)-F(s^+) \right) - \left( F(r_i^+)-F(s^+)\right)\\
&\leq (i+1)\theta - i\theta = \theta\,.
\end{align*}
Now let $m+1$ be the number of the elements of the set $R = \{r_0,\ldots,r_l\}$. We put $t_0 = r_0$ and $t_k = \min\{r_i \in R\colon r_i > t_{k-1}\}$, for $t =1, \ldots,m$.
\end{proof}

Now we are ready to prove Theorem \ref{thm:2}.
\begin{proof}[Proof of Theorem \ref{thm:2}]
Let $s<t$ and $s_0 =s$. Let $F(u)=Q(s_0,u)$ if $u>s_0$, and $F(u)=0$ if $u\leq s_0$.\\
We will prove the upper bound of (\ref{eq:thm1}) first.\\
Let $\varepsilon>0$ and $k\in \mathbb{N}_+$ be such that $(k-1)\varepsilon \leq F(t^-)-F(s^+)\leq k\varepsilon$. By Lemma \ref{lem:funckja} there are $m\in \mathbb{N}_+$, $m\leq k$ and $s=t_0<t_1<\ldots <t_m =t$ such that $F(t_{i+1}^-)-F(t_i^+)\leq \varepsilon$ for $i=0,\ldots ,m-1$. By Lemma \ref{lem:F} and Lemma \ref{lem:comb}  with $\theta = \eta+\varepsilon$, for all $x,y \in \Rd$ we obtain
\begin{align*}
\tp(s,x,t,y)&\leq \sum_{n=0}^{\infty} |p_n(s,x,t,y)|\leq \sum_{n=0}^{\infty} \binom{n+m-1}{m-1}\left(\eta+\varepsilon\right)^n p(s,x,t,y)\\
& = \left( \frac{1}{1-(\eta+\varepsilon)} \right)^m p(s,x,t,y) \leq \left( \frac{1}{1-(\eta+\varepsilon)} \right)^k p(s,x,t,y)\\
&\leq \left( \frac{1}{1-(\eta+\varepsilon)} \right)^{1+ \frac{F(t^-)-F(s^+)}{\varepsilon}} p(s,x,t,y)\,.
\end{align*}
We put $\varepsilon=\eta$ when $\eta>0$ and we let $\varepsilon\to 0$ when $\eta=0$. 
The above calculation justifies the last inequality in the statement in the theorem, as well as the change of the order of the integration and the use of Cauchy product in what follows. By Lemma \ref{lem:p_n_chap},
\begin{align*}
 \int_{\Rd}\tp(s,x,u,z)\tp(u,z,t,y)\,dz &=\int_{\Rd}\sum_{n=0}^{\infty} \sum_{m=0}^{n} p_m(s,x,u,z)p_{n-m}(u,z,t,y)\,dz \\
&=\sum_{n=0}^{\infty} \sum_{m=0}^{n} \int_{\Rd} p_m(s,x,u,z)p_{n-m}(u,z,t,y)\,dz \\
&=\sum_{n=0}^{\infty}p_n(s,x,t,y)=\tp(s,x,t,y)\,. 
\end{align*}
We now prove the lower bound. Let $\delta \in (0, \frac{1}{2}-\eta)$.\\
If $F(t^-)-F(s^+)\leq \frac{1}{2}-\eta-\delta$, then by Lemma \ref{lem:F} and induction 
$$|p_n(s,x,t,y)| \leq \left( \eta + F(t^-)-F(s^+) \right)^n p(s,x,t,y)\leq \left( \frac{1}{2}-\delta \right)^n p(s,x,t,y)\,,$$
and we get 
\begin{align}
\tp(s,x,t,y) &\geq p(s,x,t,y)-\sum_{n=1}^{\infty}|p_n(s,x,t,y)| \nonumber\\
&\geq \left( 1-\sum_{n=1}^{\infty}\left( \frac{1}{2}-\delta \right)^n \right)p(s,x,t,y) \nonumber\\
&=\left( 1- \frac{\frac{1}{2}-\delta}{1-\frac{1}{2}+\delta}\right) p(s,x,t,y) = \left( \frac{4\delta}{1+2\delta} \label{eq:low} \right)p(s,x,t,y)\,. 
\end{align}
Now in general, we set $k\in \mathbb{N}_+$ such that $(k-1) \left(  \frac{1}{2}-\eta -\delta \right) \leq F(t^-)-F(s^+)\leq k \left( \frac{1}{2}-\eta -\delta \right) $. By Lemma \ref{lem:funckja} there are $m\in \mathbb{N}_+$, $m\leq k$ and $s=t_0<t_1<\ldots <t_m =t$ such that $F(t_{i+1}^-)-F(t_i^+)\leq \frac{1}{2}-\eta-\delta$.  By (\ref{eq:low}) and (\ref{eq:Ch}) for all $x,y \in \Rd$,
\begin{align*}
\tp(s,x,t,y) &\geq \left( \frac{4\delta}{1+2\delta} \right)^m p(s,x,t,y) \geq \left( \frac{4\delta}{1+2\delta} \right)^k p(s,x,t,y)\\
&\geq \left( \frac{4\delta}{1+2\delta} \right)^{1 + \frac{F(t^-)-F(s^+)}{\frac{1}{2}-\eta-\delta}}p(s,x,t,y)\,.
\end{align*}
The assertion is true with $C=\left( 4\delta/(1+2\delta) \right)^{-\frac{1}{\frac{1}{2}-\eta-\delta}}$\,.
\end{proof}

If $b \in  \pP(\eta,p)$, then the proof is simpler and the estimates are better.

\begin{thm}\label{thm:3}
Let $p(s,x,t,y)$ be a function satisfying (\ref{assume1}) and (\ref{assume2}) and $b\in \pP(\eta,p)$ with $\eta<\frac{1}{2}$. Then for all $-\infty<s<t<\infty$, $x,y\in \Rd$,
\begin{equation}\label{ineq:thm2}
\left(\frac{1-\eta}{1-2\eta}\right)^{-1 - \frac{\beta(t-s)}{\eta}} p(s,x,t,y) \leq \tp(s,x,t,y)\leq \left( \frac{1}{1-\eta} \right)^{1+ \frac{\beta(t-s)}{\eta}} p(s,x,t,y)
\end{equation}\\
and the Chapman-Kolmogorov equation (\ref{eq:Ch}) holds.
\end{thm}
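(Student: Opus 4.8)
The plan is to run the scheme of the proof of Theorem~\ref{thm:2}, but to exploit that $b\in\pP(\eta,p)$ supplies the hypothesis (\ref{eq:comb}) of Lemma~\ref{lem:comb} with $\theta=\eta$ and \emph{without} the additive term $Q(s,t)$, on every time interval of length less than $h$. Because of this, the function $F$ and Lemmas~\ref{lem:F} and~\ref{lem:funckja} are not needed: a single equal-length partition suffices. Recall from Remark~\ref{rem:coP} that $\beta=\eta/h$. First I would fix $s<t$, let $k\in\mathbb{N}_+$ be the smallest integer with $(t-s)/k<h$ (so that $k-1\le(t-s)/h$, i.e.\ $k\le 1+(t-s)/h=1+\beta(t-s)/\eta$), and set $t_i=s+i(t-s)/k$ for $i=0,\dots,k$. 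Then $t_{i+1}-t_i<h$, so by (\ref{def:coP}) the inequality (\ref{eq:comb}) holds with $\theta=\eta$ along this partition.

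For the upper bound I would invoke Lemma~\ref{lem:comb} to get $|p_n(s,x,t,y)|\le\binom{n+k-1}{k-1}\eta^n p(s,x,t,y)$ and sum the negative-binomial series:
\[
\sum_{n=0}^{\infty}|p_n(s,x,t,y)|\le\Big(\frac{1}{1-\eta}\Big)^{k}p(s,x,t,y)\le\Big(\frac{1}{1-\eta}\Big)^{1+\beta(t-s)/\eta}p(s,x,t,y),
\]
the last step being justified by $0<1-\eta<1$ and $k\le 1+\beta(t-s)/\eta$. This already gives absolute convergence of (\ref{def:tp}), and then the Chapman--Kolmogorov identity (\ref{eq:Ch}) for $\tp$ follows verbatim as in the proof of Theorem~\ref{thm:2} (Fubini, Cauchy product, Lemma~\ref{lem:p_n_chap}). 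Note that (\ref{eq:Ch}) must be in hand before the lower-bound step, so the three steps are carried out in the order: upper bound and absolute convergence, then Chapman--Kolmogorov, then lower bound.

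For the lower bound I would first work on a single subinterval: the $k=1$ case of Lemma~\ref{lem:comb} (plain induction on $n$ from (\ref{def:coP})) gives $|p_n(t_i,z,t_{i+1},w)|\le\eta^n p(t_i,z,t_{i+1},w)$, whence
\[
\tp(t_i,z,t_{i+1},w)\ge p(t_i,z,t_{i+1},w)-\sum_{n=1}^{\infty}\eta^n p(t_i,z,t_{i+1},w)=\frac{1-2\eta}{1-\eta}\,p(t_i,z,t_{i+1},w),
\]
which is positive exactly because $\eta<\frac12$. Iterating (\ref{eq:Ch}) along $t_0<t_1<\dots<t_k$, bounding each of the $k$ factors this way, and collapsing the remaining multiple integral by (\ref{assume1}), I would arrive at
\[
\tp(s,x,t,y)\ge\Big(\frac{1-2\eta}{1-\eta}\Big)^{k}p(s,x,t,y)\ge\Big(\frac{1-2\eta}{1-\eta}\Big)^{1+\beta(t-s)/\eta}p(s,x,t,y)=\Big(\frac{1-\eta}{1-2\eta}\Big)^{-1-\beta(t-s)/\eta}p(s,x,t,y),
\]
using again $0<\frac{1-2\eta}{1-\eta}<1$ and $k\le 1+\beta(t-s)/\eta$. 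Together with the upper bound this is (\ref{ineq:thm2}).

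I do not expect a genuine obstacle: given Lemmas~\ref{lem:p_n_chap} and~\ref{lem:comb} the argument is essentially bookkeeping. The only points needing a little care are the passage from the integer exponent $k$ to $1+\beta(t-s)/\eta$ (legitimate because $\frac{1}{1-\eta}>1$ and $\frac{1-2\eta}{1-\eta}<1$, so the replacement weakens each inequality, and $\beta(t-s)/\eta=(t-s)/h$), and the observation that the single-interval lower estimate $\frac{1-2\eta}{1-\eta}$ is informative precisely when $\eta<\frac12$.
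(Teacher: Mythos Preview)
Your proposal is correct and follows essentially the same route as the paper: the same choice of $k$ with $(k-1)h\le t-s<kh$ and equal-length partition $t_i=s+i(t-s)/k$, Lemma~\ref{lem:comb} with $\theta=\eta$ and the negative-binomial sum for the upper bound, the single-interval estimate $\tp\ge\frac{1-2\eta}{1-\eta}\,p$ iterated via (\ref{eq:Ch}) for the lower bound, and the passage from exponent $k$ to $1+\beta(t-s)/\eta$ using $k\le 1+(t-s)/h$. The only cosmetic difference is that the paper obtains (\ref{eq:Ch}) by invoking Theorem~\ref{thm:2} through Remark~\ref{rem:coP}, whereas you re-derive it from absolute convergence and Lemma~\ref{lem:p_n_chap}; both are fine.
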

\begin{proof}
We have (\ref{eq:Ch}) by Theorem \ref{thm:2} and Remark \ref{rem:coP}. For the proof of (\ref{ineq:thm2}) we let $s<t$ and $k\in\mathbb{N}_+$ be such that $(k-1)h\leq t-s<kh$. By Lemma \ref{lem:comb} with $t_i=s+\frac{t-s}{k}i$ and $\theta=\eta$,
\begin{align*}
\tp(s,x,t,y)&\leq \sum_{n=0}^{\infty} |p_n(s,x,t,y)|\leq \sum_{n=0}^{\infty} \binom{n+k-1}{k-1}\eta^n p(s,x,t,y)\\
&= \left( \frac{1}{1-\eta} \right)^k p(s,x,t,y)\leq \left(\frac{1}{1-\eta} \right)^{1+\frac{\beta(t-s)}{\eta}} p(s,x,t,y)\,,
\end{align*}
where $\beta=\frac{\eta}{h}$.\\
Also, if $t-s\leq h$, then 
\begin{align}
\tp(s,x,t,y) &\geq p(s,x,t,y)-\sum_{n=1}^{\infty}|p_n(s,x,t,y)| \notag\\
&\geq \left( 1-\sum_{n=1}^{\infty}\eta^n \right)p(s,x,t,y) =\left(\frac{1-2\eta}{1-\eta}\right) p(s,x,t,y)\,. \label{eq1:thm3}
\end{align}
Now, for any $s<t$ and $k\in \mathbb{N}_+$ such that $(k-1)h\leq t-s<kh$, by (\ref{eq1:thm3}) and (\ref{eq:Ch}) we obtain
\begin{align*}
\tp(s,x,t,y) &\geq \left(\frac{1-2\eta}{1-\eta} \right)^{k}p(s,x,t,y)\\
&\geq \left(\frac{1-2\eta}{1-\eta} \right)^{1+\frac{\beta(t-s)}{\eta}}p(s,x,t,y)\,.
\end{align*}
\end{proof}

\section*{The case of the mixed fractional Laplacian}
As an example we consider the transition density of the Brownian motion subordinated by the sum of two independent stable subordinators. Such processes were recently studied in \cite{CKS}.
Let $a\geq 0$, $0<\beta<\alpha<2$. Denote $\ap(s,x,t,y)=\ap(t-s,y-x)$, where $-\infty < s<t<\infty$, $x,y \in\Rd$, and
$$
\ap(t,x) = \frac{1}{(2\pi)^d}\int_{\Rd} e^{-t\left( |\xi|^{\alpha}+ a^{\beta}|\xi|^{\beta}\right)} e^{-ix\cdot\xi}d\xi,
\quad x\in\Rd,\quad t>0.
$$
For $t \le 0$ we put $\ap(t,x) = 0$. The convolution semigroup $\ap(t,x)$ has $\Delta^{\alpha /2} + a^{\beta} \Delta^{\beta/2}$ as its infinitesimal generator  (\cite{BF}, \cite{Y},
\cite{BB1}, \cite{CKS}). In particular, for $f\in C^\infty_c(\Rd)$, and $x\in \Rd$
we have
\begin{align*}
\left( \Delta^{\alpha/2} + a^{\beta} \Delta^{\beta/2}\right) f(x)&=\lim_{t\to 0^+}\frac{1}{t}\int_{\Rd}\ap(t,y-x)(f(y)-f(x))\,dy \\
=&\lim_{\varepsilon \to 0^+}\int\limits_{|y|>\varepsilon} \left( \frac{\mathcal{A}_{d,-\alpha}}{|y|^{d+\alpha}}+\frac{a^{\beta}\mathcal{A}_{d,-\beta}}{|y|^{d+\beta}} \right) \big[ f(x+y)-f(x) \big] dy\,,
\end{align*}
where $\mathcal{A}_{d,\gamma}= \Gamma((d-\gamma)/2)/(2^{\gamma}\pi^{d/2}|\Gamma(\gamma/2)|)$. Let $\eta_t^a(u)$ be the density function of the sum of the $\alpha/2$- stable subordinator and $a^2$ times the $\beta/2$-stable subordinator. Let $g_t (x)= (4\pi t)^{-d/2}e^{-|x|/4t}$ be the d-dimensional Gaussian kernel. Then $\ap(t,x)$ can be expressed as
\begin{align*}
\ap (t,x)= \int_0^{\infty} g_u(x) \eta_t^a (u)\,du\,.
\end{align*}
Differentiating we obtain
\begin{equation}\label{eq:grad}
\nabla_x \ap (t,x)= -2\pi x \ap_{(d+2)}(t,\tilde{x})\,,
\end{equation}
where $\tilde{x}\in \RR^{d+2}$ is such that $|\tilde{x}|=|x|$ and $\ap_{(d+2)}$ stands for the function $\ap$ in dimension $d+2$ (see also \cite{BJ}).
It is crucial here to notice that $\ap (s,x,t,y)$ satisfies (\ref{assume1}) and (\ref{assume2}) for every $a\geq 0$.

In what follows we assume that $1< \beta < \alpha < 2$. This restriction emerges naturally for gradient perturbations of stable processes, although some of the results below (Lemma \ref{lem:Gp} and Remark \ref{rem:3P}) are true for any $0<\beta<\alpha<2$.

We first consider the case of $a=0$. Then $\ap(t,x)=p^0(t,x)$ simplifies to the transition density of the isotropic $\alpha$-stable L\'evy process. Gradient, or drift,  perturbations have been recently intensely studied for this process (see \cite{BJ,J1,J2,JS,TJ}). 
Theorem \ref{thm:4} takes the following form 

\begin{prop}
Let $b\in \pN (\eta,Q,p^0)$. If $\eta<1/2$, then there is a positive transition density $\widetilde{p^0}$ such that
\begin{equation*}
\int_s^{\infty}\int_{\Rd}\widetilde{p^0}(s,x,u,z)[\partial_u \phi(u,z) + \Delta^{\alpha/2}\phi(u,z) + b(u,z) \cdot \nabla_{z}\phi(u,z)]\,dzdu\,= -\phi(s,x)\,,
\end{equation*}
where $s\in \RR$, $x \in \Rd$ and $\phi \in C_{c}^{\infty}(\RR \times \Rd )$. 
\end{prop}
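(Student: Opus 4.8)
The plan is to verify that $\widetilde{p^0}$, whose existence, positivity, estimates~(\ref{eq:thm1}) and the Chapman--Kolmogorov equation~(\ref{eq:Ch}) are already supplied by Theorem~\ref{thm:2}, solves the stated weak (distributional) equation for the operator $L + b\cdot\nabla$ with $L = \Delta^{\alpha/2}$. The natural route is to establish the analogous identity for the unperturbed kernel $p^0$ first, and then to propagate it term by term along the perturbation series $\widetilde{p^0} = \sum_{n\ge 0} p_n$, using the recursive definition~(\ref{def:p_n}) of $p_n$.

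First I would record the \emph{free} identity: for $p^0$ the density of the $\alpha$-stable semigroup, and any $\phi \in C_c^\infty(\RR\times\Rd)$,
\begin{equation*}
\int_s^{\infty}\int_{\Rd} p^0(s,x,u,z)\bigl[\partial_u\phi(u,z) + \Delta^{\alpha/2}_z\phi(u,z)\bigr]\,dz\,du = -\phi(s,x)\,.
\end{equation*}
This is the standard statement that $p^0(s,x,u,z)$ is the fundamental solution of $\partial_u - \Delta^{\alpha/2}_z$ with pole at $(s,x)$; it follows from $\partial_u \int p^0(s,x,u,z)\phi(u,z)\,dz = \int p^0(s,x,u,z)[\partial_u\phi + \Delta^{\alpha/2}_z\phi]\,dz$ (differentiating under the integral, using the heat equation for $p^0$ and self-adjointness of $\Delta^{\alpha/2}$ against the Schwartz-class $z\mapsto\phi(u,z)$), integrated in $u$ from $s$ to $\infty$, together with $\int p^0(s,x,u,z)\phi(u,z)\,dz \to \phi(s,x)$ as $u\downarrow s$ and $\to 0$ as $u\to\infty$ (the kernel is conservative and $\phi$ has compact time support). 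Next, for $n\ge 1$, plug the definition $p_n(s,x,u,z) = \int_s^u\int_{\Rd} p_{n-1}(s,x,r,w)\,b(r,w)\cdot\nabla_w p^0(r,w,u,z)\,dw\,dr$ into $\int_s^\infty\int_{\Rd} p_n(s,x,u,z)[\partial_u\phi + \Delta^{\alpha/2}_z\phi]\,dz\,du$, apply Fubini (justified by the absolute convergence coming from $b\in\pN(\eta,Q,p^0)$ and the estimate~(\ref{eq:thm1})), and use the free identity with source point $(r,w)$ and test function $\phi$ to collapse the $(u,z)$-integral to $-\int_s^\infty\int_{\Rd} p_{n-1}(s,x,r,w)\,b(r,w)\cdot\nabla_w\phi(r,w)\,dw\,dr$. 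Summing over $n\ge 0$ and telescoping then gives exactly
\begin{equation*}
\int_s^{\infty}\int_{\Rd}\widetilde{p^0}(s,x,u,z)\bigl[\partial_u\phi + \Delta^{\alpha/2}_z\phi\bigr]\,dz\,du = -\phi(s,x) - \int_s^{\infty}\int_{\Rd}\widetilde{p^0}(s,x,r,w)\,b(r,w)\cdot\nabla_w\phi(r,w)\,dw\,dr\,,
\end{equation*}
which rearranges to the assertion.

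There are two technical points to handle with care. The first, and the one I expect to be the main obstacle, is the rigorous justification of the free identity in the fractional setting and of interchanging $\int_s^\infty du$ with $\Delta^{\alpha/2}_z$ and with the $(r,w)$-integration: because $\Delta^{\alpha/2}$ is nonlocal one must control $\Delta^{\alpha/2}_z\phi(u,z)$ uniformly (it decays like $|z|^{-d-\alpha}$ for large $|z|$, which is integrable against the bounded-in-$z$ quantities that appear), and the mild integrability near $r=u$ in the formula for $p_n$ must be absorbed by condition~(\ref{def:coN}); here the explicit gradient bound~(\ref{eq:grad}) for $\nabla_z p^0$ is what makes the $b\cdot\nabla_w p^0$ terms amenable. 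The second point is purely bookkeeping: one needs dominated convergence for the limits $u\downarrow s$ and $u\to\infty$ inside the series, again controlled by~(\ref{eq:thm1}) and the fact that $\phi$ has compact support in time so that only finitely much of the $u$-range contributes. Once these integrability estimates are in place, the term-by-term propagation is routine and the telescoping is immediate from the structure of~(\ref{def:p_n}).
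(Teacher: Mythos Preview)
Your approach is correct and is essentially the one the paper defers to \cite{JS} for; in the parallel proof of Theorem~\ref{thm:4} the paper singles out exactly the differentiation-under-the-integral identity $\nabla_w \int p^0(r,w,u,z)\psi(u,z)\,dz\,du = \int \nabla_w p^0(r,w,u,z)\psi(u,z)\,dz\,du$ (valid for $\psi$ dominated by a shifted copy of $p^0$, in particular for $\psi=\partial_u\phi+\Delta^{\alpha/2}\phi$) as the crucial technical point. That identity is precisely what lets you pass from the $\nabla_w p^0$ appearing in your $n\ge1$ step back to the free identity for $p^0$ at source $(r,w)$, and is the one place where your outline is slightly implicit.
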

\noindent
We note that this result extends Theorem 1 in \cite{JS} to the wider class of drift functions from $\pN$.
We omit the proof as it is similar to that of \cite[Theorem 1]{JS}. We also remark that Theorem \ref{thm:2} in the present paper gives estimates for the gradient perturbations $\widetilde{p^0}$, if $b\in\pN$.

Now let $a>0$. By writing $f(x)\approx g(x)$ we mean that there is a number $0<C<\infty$ such that for every $x$ we have $C^{-1} f(x) \leq g(x) \leq C f(x)$. It is known that (see \cite{CKS})
\begin{align}\label{approx:ap}
\ap (t,x) \approx \left( t^{-d/ \alpha}\land (a^{\beta}t)^{-d/\beta}\right) \land \left( \frac{t}{|x|^{d+\alpha}} + \frac{a^{\beta}t}{|x|^{d+\beta}}\right)
\end{align}
on $(0,\infty)\times \Rd$,  and that the scaling property holds,
\begin{align}\label{eq:scaing}
\ap (t,x)= a^{\frac{\beta d}{\alpha - \beta}} p^1 (a^{\frac{\alpha \beta}{\alpha - \beta}}t,a^{\frac{\beta}{\alpha - \beta}}x)\,.
\end{align}
\noindent
To simplify the notation we denote
\begin{align*}
\widehat{\ap}(t,x)= (t^{-\frac{1}{\alpha}}\land (a^{\beta}t)^{-\frac{1}{\beta}})\ap (t,x)\,, \quad t>0,\, x\in \Rd.
\end{align*}
\begin{lem}\label{lem:Gp}
There exists a constant C such that for all $t>0$ and $x\in \Rd$,
\begin{align*}
|\nabla_x \ap(t,x)|\leq C \widehat{\ap}(t,x)\,.
\end{align*}
\end{lem}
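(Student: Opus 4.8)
The plan is to reduce the claimed gradient bound to the already-available two-sided estimate \eqref{approx:ap} together with the exact formula \eqref{eq:grad} for $\nabla_x \ap$. Starting from
\[
|\nabla_x \ap(t,x)| = 2\pi |x|\, \ap_{(d+2)}(t,\tilde x)\,, \qquad |\tilde x| = |x|\,,
\]
I would apply the upper bound part of \eqref{approx:ap} in dimension $d+2$ to $\ap_{(d+2)}(t,\tilde x)$, and then compare the resulting expression with $\widehat{\ap}(t,x) = (t^{-1/\alpha}\wedge(a^\beta t)^{-1/\beta})\,\ap(t,x)$, using the lower bound part of \eqref{approx:ap} in dimension $d$ for $\ap(t,x)$. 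So the whole proof is a pointwise inequality between the four explicit quantities
\[
|x|\left[\bigl(t^{-(d+2)/\alpha}\wedge (a^\beta t)^{-(d+2)/\beta}\bigr)\wedge\Bigl(\tfrac{t}{|x|^{d+2+\alpha}}+\tfrac{a^\beta t}{|x|^{d+2+\beta}}\Bigr)\right]
\]
and
\[
\bigl(t^{-1/\alpha}\wedge(a^\beta t)^{-1/\beta}\bigr)\left[\bigl(t^{-d/\alpha}\wedge (a^\beta t)^{-d/\beta}\bigr)\wedge\Bigl(\tfrac{t}{|x|^{d+\alpha}}+\tfrac{a^\beta t}{|x|^{d+\beta}}\Bigr)\right],
\]
up to multiplicative constants, uniformly in $t>0$ and $x\in\Rd$.

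The natural way to carry this out is by splitting into the regimes where the minima in \eqref{approx:ap} are attained by the "small $|x|$" (on-diagonal) term versus the "large $|x|$" (polynomial) term, and within each regime comparing the $\alpha$-branch against the $\beta$-branch of each $\wedge$. Concretely: when $|x|$ is small enough that both sides equal their on-diagonal values, the inequality becomes $|x|\,(t^{-(d+2)/\alpha}\wedge(a^\beta t)^{-(d+2)/\beta}) \lesssim (t^{-1/\alpha}\wedge(a^\beta t)^{-1/\beta})(t^{-d/\alpha}\wedge(a^\beta t)^{-d/\beta})$, which after distributing the $\wedge$'s reduces to checking that $|x|$ is indeed bounded by the relevant spatial scale $t^{1/\alpha}\vee(a^\beta t)^{1/\beta}$ — precisely the threshold separating the on-diagonal regime from the tail regime. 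In the tail regime the polynomial terms match up after cancelling one power of $|x|$, again modulo sorting out which of the two additive terms dominates. I would organize this as two or three short displayed case computations rather than a single chain of inequalities.

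A cleaner alternative that avoids most of the case-chasing is to invoke the scaling property \eqref{eq:scaing}: differentiating it gives $\nabla_x \ap(t,x) = a^{\frac{\beta d}{\alpha-\beta}} a^{\frac{\beta}{\alpha-\beta}} \nabla p^1(a^{\frac{\alpha\beta}{\alpha-\beta}}t, a^{\frac{\beta}{\alpha-\beta}}x)$ and a parallel identity for $\widehat{\ap}$, so it suffices to prove the bound for $a=1$, i.e. $|\nabla_x p^1(t,x)| \le C\,(t^{-1/\alpha}\wedge t^{-1/\beta})\,p^1(t,x)$. For $a=1$ one can treat $t\le 1$ and $t\ge 1$ separately: for small $t$ the $\alpha$-part dominates and the estimate follows from the known gradient bound $|\nabla_x p^0(t,x)|\lesssim t^{-1/\alpha}p^0(t,x)$ for the pure $\alpha$-stable density together with $p^1 \approx p^0$ on bounded time intervals near the diagonal (and a direct tail estimate otherwise); for large $t$ the $\beta$-part takes over by an analogous comparison with the pure $\beta$-stable density. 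The main obstacle in either route is bookkeeping: there is no conceptual difficulty, but one must be careful that the constants in \eqref{approx:ap} and \eqref{eq:grad} are applied in the correct dimension and that the on-diagonal versus tail crossover is handled consistently across the $\alpha$- and $\beta$-branches; I would present the scaling reduction first and then the two time-regime estimates, keeping each to a couple of lines.
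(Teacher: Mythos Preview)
Your first route---reduce by scaling to $a=1$, apply \eqref{eq:grad} and \eqref{approx:ap} in dimension $d+2$, and compare with $\widehat{p^1}$ via \eqref{approx:ap} in dimension $d$---is exactly the paper's approach. The difference is in execution: instead of splitting into on-diagonal and tail regimes and sorting the $\alpha$- versus $\beta$-branches case by case, the paper observes directly that the right-hand side of
\[
|\nabla_x p^1(t,x)|\approx |x|\left(t^{-\frac{d+2}{\alpha}}\wedge t^{-\frac{d+2}{\beta}}\wedge\Bigl(\tfrac{t}{|x|^{d+2+\alpha}}\vee\tfrac{t}{|x|^{d+2+\beta}}\Bigr)\right)
\]
factors as
\[
|x|\left(t^{-2/\alpha}\wedge t^{-2/\beta}\wedge\tfrac{1}{|x|^2}\right)\left(t^{-d/\alpha}\wedge t^{-d/\beta}\wedge\Bigl(\tfrac{t}{|x|^{d+\alpha}}\vee\tfrac{t}{|x|^{d+\beta}}\Bigr)\right),
\]
the point being that the crossover conditions $|x|^\alpha\ge t$, $|x|^\beta\ge t$ are independent of $d$. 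Then one finishes with $|x|(t^{-2/\alpha}\wedge t^{-2/\beta}\wedge |x|^{-2})\le t^{-1/\alpha}\wedge t^{-1/\beta}$. This is the same content as your case analysis, packaged so that no case distinctions are visible.

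Your ``cleaner alternative'' is shakier as written. The comparison $p^1\approx p^0$ fails globally on any time interval: for $t\le 1$ and $|x|$ large, $p^1(t,x)\approx t|x|^{-d-\beta}$ while $p^0(t,x)\approx t|x|^{-d-\alpha}$, so these are not comparable. More importantly, the known bound $|\nabla_x p^0|\lesssim t^{-1/\alpha}p^0$ does not by itself control $|\nabla_x p^1|$, since $p^1$ is not a perturbation of $p^0$ in any sense that transfers gradient estimates. You would still end up going through \eqref{eq:grad} and \eqref{approx:ap} in dimension $d+2$, at which point you are back to the first route. So drop the second alternative and present the first; if you want it tidy, use the factorization above rather than a regime split.
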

\begin{proof}
By scaling we may assume that $a=1$. By (\ref{eq:grad}) and (\ref{approx:ap}) we have
\begin{align}\label{exp1}
|\nabla_x p^1(t,x)|\approx |x| \left( t^{-\frac{d+2}{\alpha}}\land t^{-\frac{d+2}{\beta}} \land \left( \frac{t}{|x|^{d+2+\alpha}} \lor \frac{t}{|x|^{d+2+\beta}}\right) \right)\,.
\end{align}
We claim that the right hand side of (\ref{exp1}) equals
\begin{align}\label{exp2}
|x| \left( t^{-\frac{2}{\alpha}}\land t^{-\frac{2}{\beta}} \land \frac{1}{|x|^2} \right) \left( t^{-\frac{d}{\alpha}}\land t^{-\frac{d}{\beta}} \land  \left( \frac{t}{|x|^{d+\alpha}} \lor \frac{t}{|x|^{d+\beta}}\right) \right)\,.
\end{align}
Indeed, the inequality
\begin{align*}
\left( \frac{t}{|x|^{d+\alpha}} \lor \frac{t}{|x|^{d+\beta}}\right) \leq  \left( t^{-\frac{d}{\alpha}}\land t^{-\frac{d}{\beta}} \right)\,,
\end{align*}
holds if and only if $|x|^{\alpha}\geq t$, $|x|^{\beta}\geq t$, $|x|^{\beta \frac{d+\alpha}{d+\beta}}\geq t$ and $|x|^{\alpha \frac{d+\beta}{d+\alpha}}\geq t$. But $\beta \leq \beta \frac{d+\alpha}{d+\beta}\leq \alpha$ and $\beta \leq \alpha \frac{d+\beta}{d+\alpha} \leq \alpha$, so these are equivalent to $|x|^{\alpha}\geq t$ and $|x|^{\beta}\geq t$, regardless of the dimension $d$. This proves the claim.\\
We now notice that
$$
|x| \left( t^{-\frac{2}{\alpha}}\land t^{-\frac{2}{\beta}} \land \frac{1}{|x|^2}\right) \leq \left( t^{-\frac{1}{\alpha}}\land t^{-\frac{1}{\beta}} \right)\,,
$$
which ends the proof.
\end{proof}
Now we prove Theorem \ref{thm:4}
\begin{proof}[Proof of Theorem \ref{thm:4}]
We note that by Lemma \ref{lem:Gp} and (\ref{approx:ap}) $\ap$ satisfies 
\begin{align*}
\frac{\partial}{\partial x_i} \int_s^{\infty} \int_{\Rd} \ap(s,x,r,z)\psi(r,z) dzdr = \int_s^{\infty} \int_{\Rd} \frac{\partial}{\partial x_i} \ap(s,x,r,z)\psi(r,z)dzdr\,,
\end{align*}
for any $\psi: \RR \times \Rd \to \RR$ such that $|\psi(s,x)|\leq c \ap(s,x,t_0,y_0)$ for some $c>0$, $t_0 \in \RR$, $y_0 \in \Rd$ and all $(s,x)\in \RR \times \Rd$. Moreover, by (\ref{approx:ap}) for any $\phi \in C_c^{\infty}(\RR\times \Rd)$ we can take $\psi(s,x)= \left( \Delta^{\alpha/2} + a^{\beta}\Delta^{\beta/2} \right)\phi (s,x) $.
Thus, the proof may be carried out as the proof of Theorem 1 in \cite{JS}.

\end{proof}

Next we will show some properties of the function $\ap(t,x)$ useful when dealing with conditions (\ref{def:coP}) or (\ref{def:coN}).

\begin{lem}[3P] \label{lem:3Pex}
There exists a constant C such that for all $0<u,r<\infty$ and $x,y \in \Rd$ we have
\begin{align}\label{ineq:3Pex}
\widehat{\ap}(u,x)\land \widehat{\ap}(r,y)\leq C \widehat{\ap}(u+r,x+y)\,.
\end{align}
\end{lem}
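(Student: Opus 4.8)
The plan is to prove the ``3P inequality'' (\ref{ineq:3Pex}) for $\widehat{\ap}$ by reducing it to elementary comparisons between the two explicit building blocks of the estimate (\ref{approx:ap}), namely the ``on-diagonal'' term $t^{-d/\alpha}\land(a^\beta t)^{-d/\beta}$ and the ``off-diagonal'' term $t/|x|^{d+\alpha}+a^\beta t/|x|^{d+\beta}$, together with the scalar prefactor $t^{-1/\alpha}\land(a^\beta t)^{-1/\beta}$. By the scaling relation (\ref{eq:scaing}) (equivalently (\ref{eq:scaing}) applied to $\widehat{\ap}$) I would first reduce to the case $a=1$, so that $\widehat{p^1}(t,x) \approx \bigl(t^{-1/\alpha}\land t^{-1/\beta}\bigr)\bigl(\,(t^{-d/\alpha}\land t^{-d/\beta})\land(t/|x|^{d+\alpha}+t/|x|^{d+\beta})\,\bigr)$, up to multiplicative constants; it is enough to prove (\ref{ineq:3Pex}) for this product of explicit functions.

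Next I would exploit the symmetry in the roles of $(u,x)$ and $(r,y)$ to assume without loss of generality that $u\ge r$, hence $u+r\le 2u$ and, by monotonicity of each factor in the time variable, $\widehat{p^1}(u+r,x+y)\gtrsim \widehat{p^1}(2u,x+y)\gtrsim \widehat{p^1}(u,x+y)$ up to a constant depending only on $\alpha,\beta$. So the claim is reduced to $\widehat{p^1}(u,x)\land\widehat{p^1}(r,y)\lesssim \widehat{p^1}(u,x+y)$ with $u\ge r$. I would then split according to the location of $x+y$: if $|x+y|^\alpha\ge u$ (the ``far'' regime, where the off-diagonal term dominates) I estimate $\widehat{p^1}(u,x+y)\approx (u^{-1/\alpha}\land u^{-1/\beta})\,(u/|x+y|^{d+\alpha}+u/|x+y|^{d+\beta})$, and since $|x+y|\le|x|+|y|\le 2\max(|x|,|y|)$, the larger of $|x|,|y|$ — say $|x|$ — satisfies $|x|\ge|x+y|/2$, giving $\widehat{p^1}(u,x)\lesssim \widehat{p^1}(u,x+y)$ directly (the prefactor only improves since $u$ is the larger time), which bounds the minimum. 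If instead $|x+y|^\alpha< u$ (the ``near'' regime), then $\widehat{p^1}(u,x+y)\approx u^{-1/\alpha}\cdot u^{-d/\alpha} = u^{-(d+1)/\alpha}$ (using $u\ge r$, and $u\ge 1$ vs $u<1$ handled by the $\land$), and I need $\widehat{p^1}(u,x)\land\widehat{p^1}(r,y)\lesssim u^{-(d+1)/\alpha}$; but each of $\widehat{p^1}(u,x)$ and $\widehat{p^1}(r,y)$ is bounded by its on-diagonal value, $\lesssim u^{-(d+1)/\alpha}\lor u^{-(d+1)/\beta}$ resp. $\lesssim r^{-(d+1)/\alpha}\lor r^{-(d+1)/\beta}$, and a short case analysis in $u\gtrless 1$, $r\gtrless 1$ shows the minimum of these two is always $\lesssim u^{-(d+1)/\alpha}$ when $u\ge r$; the only mildly delicate subcase is $r<1\le u$, where $\widehat{p^1}(r,y)\lesssim r^{-(d+1)/\beta}$ could be large, but then one falls back on $\widehat{p^1}(u,x)\lesssim u^{-(d+1)/\alpha}$ via its on-diagonal bound, since for $u\ge 1$ the on-diagonal value is exactly $u^{-(d+1)/\alpha}$.

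The main obstacle I anticipate is organizing the case distinctions cleanly: there are genuinely several regimes (the crossover at $|x+y|^\alpha\sim u$, the crossover of the on-diagonal term at $u\sim 1$ and $r\sim 1$ coming from the $\land$ between the $\alpha$- and $\beta$-scales, and the choice of which of $|x|,|y|$ is larger), and naively one gets a proliferation of subcases. I would mitigate this by proving two clean auxiliary monotonicity facts up front — that $t\mapsto \widehat{p^1}(t,x)$ is comparable on $[t,2t]$, and that $|z|\mapsto \widehat{p^1}(t,z)$ is (essentially) non-increasing in $|z|$ — so that the ``near/far'' split and the $|x|\ge|x+y|/2$ step become one-liners, leaving only the on-diagonal scalar comparison $u^{-(d+1)/\alpha}\land(\text{stuff})$ to verify by hand. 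With those in place the proof is a finite, routine check; I would not write out every inequality but would record the comparison $t^{-1/\alpha}\land t^{-1/\beta}$ behaves like $t^{-1/\alpha}$ for $t\ge 1$ and $t^{-1/\beta}$ for $t\le 1$, which is the one fact driving all the $u\gtrless 1$ splitting.
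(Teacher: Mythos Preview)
Your approach is genuinely different from the paper's and can be made to work, but the sketch has a real gap. After assuming $u\ge r$ by symmetry, you cannot also assume ``say $|x|$'' is the larger of $|x|,|y|$: those two WLOGs are incompatible because $(u,x)$ and $(r,y)$ are linked. In the far case with $|y|>|x|$, radial monotonicity only gives $\widehat{p^1}(r,y)\lesssim\widehat{p^1}(r,x+y)$, not $\lesssim\widehat{p^1}(u,x+y)$, and there is no general comparison between these since $t\mapsto\widehat{p^1}(t,z)$ is \emph{not} monotone (it increases in the off-diagonal regime, decreases in the on-diagonal one). To close this subcase you need the observation that $t\mapsto t^{1-1/\alpha}\wedge t^{1-1/\beta}$ is increasing (here $\alpha,\beta>1$), so that the off-diagonal bound $\widehat{p^1}(r,y)\le (r^{1-1/\alpha}\wedge r^{1-1/\beta})(|y|^{-d-\alpha}+|y|^{-d-\beta})$ can be pushed up to time $u$. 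Two smaller slips: your near/far boundary $|x+y|^\alpha=u$ does guarantee on-diagonal on the ``near'' side, but for $u>1$ the true off-diagonal regime starts only around $|x+y|\approx u^{1/\beta}$, so your ``far'' still contains an on-diagonal band; and the on-diagonal sup of $\widehat{p^1}(u,\cdot)$ is $u^{-(d+1)/\alpha}\wedge u^{-(d+1)/\beta}$, not $\vee$ --- with $\wedge$ your near case collapses to the one line $\widehat{p^1}(u,x)\le\widehat{p^1}(u,0)\approx\widehat{p^1}(u,x+y)$, no $u\gtrless1$, $r\gtrless1$ splitting needed.

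The paper sidesteps all case analysis. Writing $\widehat{p^1}(t,z)\approx D(t)\wedge E(t,z)$ with $D(t)=t^{-(d+1)/\alpha}\wedge t^{-(d+1)/\beta}$ and $E(t,z)=(t^{1-1/\alpha}\wedge t^{1-1/\beta})(|z|^{-d-\alpha}+|z|^{-d-\beta})$, it proves $D(u)\wedge D(r)\lesssim D(u+r)$ and $E(u,x)\wedge E(r,y)\lesssim E(u+r,x+y)$ separately --- the second via exactly the monotonicity of $t^{1-1/\gamma}$ plus $|x|^{-k}\wedge|y|^{-k}\lesssim|x+y|^{-k}$ --- and then takes the minimum of the two bounds. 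This is symmetric in $(u,x)$ and $(r,y)$ throughout and never asks which regime $(u+r,x+y)$ falls in; it is shorter and cleaner than the regime-by-regime route, though your approach, once patched with the same monotonicity ingredient, would also go through.
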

\begin{proof}
By (\ref{eq:scaing}) it suffices to consider only  $a=1$. We first notice that
\begin{align}
\left( u^{-\frac{d+1}{\alpha}}\right.  &\land  \left.u^{-\frac{d+1}{\beta}}\right)  \land \left( r^{-\frac{d+1}{\alpha}}\land r^{-\frac{d+1}{\beta}}\right) \nonumber\\
&\leq c \left( \left( u+r \right)^{-\frac{d+1}{\alpha}} \land \left(u+r \right)^{-\frac{d+1}{\beta}} \right) \nonumber\\
&= c \left( \left( u+r \right)^{-\frac{1}{\alpha}} \land \left( u+r \right)^{-\frac{1}{\beta}} \right) \left( \left( u+r \right)^{-\frac{d}{\alpha}} \land \left( u+r \right)^{-\frac{d}{\beta}} \right)\,. \label{eq:3Pex1}
\end{align}
Since 
\begin{align*}
\left( u^{1-\frac{1}{\alpha}}\land u^{1-\frac{1}{\beta}} \right) &\leq  \left(  (u+r)^{1-\frac{1}{\alpha}} \land (u+r)^{1-\frac{1}{\beta}} \right)\,,\\
\left( r^{1-\frac{1}{\alpha}}\land r^{1-\frac{1}{\beta}}\right) &\leq \left(  (u+r)^{1-\frac{1}{\alpha}} \land (u+r)^{1-\frac{1}{\beta}} \right)\,,
\end{align*}
we conclude that
\begin{align}
&\left( \frac{u^{1-\frac{1}{\alpha}} \land u^{1-\frac{1}{\beta}}}{|x|^{d+\alpha}} + \frac{u^{1-\frac{1}{\alpha}} \land u^{1-\frac{1}{\beta}}}{|x|^{d+\beta}} \right) \land \left( \frac{r^{1-\frac{1}{\alpha}} \land r^{1-\frac{1}{\beta}}}{|y|^{d+\alpha}} + \frac{r^{1-\frac{1}{\alpha}} \land r^{1-\frac{1}{\beta}}}{|y|^{d+\beta}} \right) \nonumber\\
\leq &\,   \left(  (u+r)^{1-\frac{1}{\alpha}} \land (u+r)^{1-\frac{1}{\beta}} \right) \left( \left( \frac{1}{|x|^{d+\alpha}} + \frac{1}{|x|^{d+\beta}} \right) \land \left( \frac{1}{|y|^{d+\alpha}} + \frac{1}{|y|^{d+\beta}} \right) \right) \nonumber\\
= &\,   \left(  (u+r)^{-\frac{1}{\alpha}} \land (u+r)^{-\frac{1}{\beta}} \right) \left( \left( \frac{u+r}{|x|^{d+\alpha}} \land \frac{u+r}{|y|^{d+\alpha}} \right) + \left( \frac{u+r}{|x|^{d+\beta}} \land \frac{u+r}{|y|^{d+\beta}} \right) \right) \nonumber\\
\leq &\, c \left(  (u+r)^{-\frac{1}{\alpha}} \land (u+r)^{-\frac{1}{\beta}} \right) \left( \frac{u+r}{|x+y|^{d+\alpha}} + \frac{u+r}{|x+y|^{d+\beta}} \right)\,. \label{eq:3Pex2}
\end{align}
Finally, by (\ref{eq:3Pex1}) and (\ref{eq:3Pex2}),
\begin{align*}
\widehat{p^1} (u,x)\land \widehat{p^1}(r,y) \approx &\left( u^{-\frac{d+1}{\alpha}}  \land u^{-\frac{d+1}{\beta}}\right)  \land \left( \frac{u^{1-\frac{1}{\alpha}} \land u^{1-\frac{1}{\beta}}}{|x|^{d+\alpha}} + \frac{u^{1-\frac{1}{\alpha}} \land u^{1-\frac{1}{\beta}}}{|x|^{d+\beta}} \right) \land \\
& \left( r^{-\frac{d+1}{\alpha}}\land r^{-\frac{d+1}{\beta}}\right) \land  \left( \frac{r^{1-\frac{1}{\alpha}} \land r^{1-\frac{1}{\beta}}}{|y|^{d+\alpha}} + \frac{r^{1-\frac{1}{\alpha}} \land r^{1-\frac{1}{\beta}}}{|y|^{d+\beta}} \right) \\
\leq &  c \left( \left( u+r \right)^{-\frac{1}{\alpha}} \land \left( u+r \right)^{-\frac{1}{\beta}} \right) \Bigg( \left( \left( u+r \right)^{-\frac{d}{\alpha}} \land \left( u+r \right)^{-\frac{d}{\beta}} \right) \land \\
&\left( \frac{u+r}{|x+y|^{d+\alpha}} + \frac{u+r}{|x+y|^{d+\beta}} \right) \Bigg) \\
\approx & \quad \widehat{p^1} (u+r,x+y)\,.
\end{align*}
\end{proof}
\noindent We note that for $a=0$, Lemma \ref{lem:3Pex} reduces to Lemma 4 from \cite{JS}.
\begin{rem}\label{rem:3P}
\begin{rm}
By exactly the same proof the inequality (\ref{ineq:3Pex}) is true for $1\leq \beta <\alpha <2$. For that range of $\alpha$ and $\beta$ it implies another $3P$-type inequality: for all $0<u,r< \infty$ and $x,y \in \Rd$
\begin{align}\label{ineq:3P}
\ap (u,x) \land \ap(r,y) \leq C \ap(u+r,x+y)\,.
\end{align}
However, (\ref{ineq:3P}) holds for any $0<\beta <\alpha <2$ by a proof simpler than that of Lemma \ref{lem:3Pex} (see \cite[Theorem 4]{BJ}). The details are left to the reader.
\end{rm}
\end{rem}

\begin{cor}\label{cor:php}
There exists a constant C such that for all $0<u,r<\infty$ and $x,y \in \Rd$ we have
\begin{align*}
\ap (u,x) \widehat{\ap}(r,y) \leq C \ap(u+r,x+y)\left( \widehat{\ap}(u,x) + \widehat{\ap}(r,y)\right)\,.
\end{align*}
\end{cor}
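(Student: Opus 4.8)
The plan is to obtain this directly from the $3P$ inequality of Lemma~\ref{lem:3Pex}, using only that the weight distinguishing $\widehat{\ap}$ from $\ap$ is monotone in the time variable.

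First I would write $\widehat{\ap}(t,x) = \phi(t)\,\ap(t,x)$ with $\phi(t) = t^{-1/\alpha}\wedge(a^{\beta}t)^{-1/\beta}$, and record that $\phi$ is positive and non-increasing on $(0,\infty)$ (it is the minimum of the two decreasing functions $t\mapsto t^{-1/\alpha}$ and $t\mapsto a^{-1}t^{-1/\beta}$); in particular $\phi(u+r)\le\phi(u)$ for all $u,r>0$. Next, I would apply Lemma~\ref{lem:3Pex}, i.e.\ $\widehat{\ap}(u,x)\wedge\widehat{\ap}(r,y)\le C\,\widehat{\ap}(u+r,x+y)$, and multiply both sides by $\widehat{\ap}(u,x)\vee\widehat{\ap}(r,y)\ge 0$; the left-hand side becomes $\widehat{\ap}(u,x)\,\widehat{\ap}(r,y)$, so
$$
\widehat{\ap}(u,x)\,\widehat{\ap}(r,y)\;\le\; C\,\widehat{\ap}(u+r,x+y)\,\bigl(\widehat{\ap}(u,x)\vee\widehat{\ap}(r,y)\bigr).
$$
Finally I would substitute $\widehat{\ap}(u,x)=\phi(u)\ap(u,x)$ and $\widehat{\ap}(u+r,x+y)=\phi(u+r)\ap(u+r,x+y)$ on the left factor and in the outer factor on the right, divide through by $\phi(u)>0$, and then use $\phi(u+r)/\phi(u)\le1$ together with $\widehat{\ap}(u,x)\vee\widehat{\ap}(r,y)\le\widehat{\ap}(u,x)+\widehat{\ap}(r,y)$. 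This yields exactly
$$
\ap(u,x)\,\widehat{\ap}(r,y)\;\le\; C\,\ap(u+r,x+y)\,\bigl(\widehat{\ap}(u,x)+\widehat{\ap}(r,y)\bigr).
$$

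There is no real obstacle beyond Lemma~\ref{lem:3Pex} itself; the one point that must be noticed is the monotonicity of the prefactor $\phi$, which is what lets the ratio $\phi(u+r)/\phi(u)$ be discarded after cancelling a single factor of $\widehat{\ap}(u,x)$. (The same argument applies verbatim when $a=0$, since then $\phi(t)=t^{-1/\alpha}$ is still decreasing.)
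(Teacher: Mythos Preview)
Your proof is correct and follows essentially the same route as the paper: both arguments use $ab=(a\wedge b)(a\vee b)$ together with the $3P$ inequality of Lemma~\ref{lem:3Pex}, then divide by the time weight $\phi(u)=u^{-1/\alpha}\wedge(a^\beta u)^{-1/\beta}$ and discard the ratio $\phi(u+r)/\phi(u)\le 1$. Your write-up is in fact a bit more explicit about why $\phi$ is non-increasing, which is exactly the point the paper records as the inequality $\big((u+r)^{-1/\alpha}\wedge(u+r)^{-1/\beta}\big)/\big(u^{-1/\alpha}\wedge u^{-1/\beta}\big)\le 1$.
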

\begin{proof}
For any $a,b\geq 0$ we have $ab= (a\land b)(a \lor b)$ and $(a \lor b)\leq (a+b)$. We rewrite the right hand side, use Lemma \ref{lem:3Pex} and apply the inequality $\frac{1}{ \left( u^{-1/\alpha}\land u^{-1/\beta}\right)} \left( (u+r)^{-1/\alpha}\land(u+r)^{-1/\beta} \right)\leq 1$,
\begin{align*}
\frac{1}{\left( u^{-1/\alpha}\land u^{-1/\beta} \right)} \, \widehat{\ap}(u,x) \widehat{\ap} (r,y) \leq &\frac{C}{\left( u^{-1/\alpha}\land u^{-1/\beta} \right)} \, \widehat{\ap}(u+r, x+y) \\
&\left( \widehat{\ap}(u+r) +\widehat{\ap}(u+r) \right)\,.
\end{align*}
\end{proof}

\begin{rem}\label{rem:fac}
\begin{rm}
Lemma \ref{lem:Gp} and Corollary \ref{cor:php} give the existence of a constant $C$, such that for all $s<t$ and $x,y\in \Rd$,
\begin{align}
&\int_s^t \int_{\Rd}\ap (s,x,t,y) |b(u,z)||\nabla_z \ap (u,z,t,y)|\, dzdu\nonumber \\
\leq & \, C  \left( \int_s^t \int_{\Rd}\left( \widehat{\ap}(s,x,u,z) + \widehat{\ap}(u,z,t,y) \right) |b(u,z)|\, dzdu \right) \, \ap (s,x,t,y)\,, \label{ineq:fac}
\end{align}
where $\widehat{\ap}(s,x,t,y)= \widehat{\ap}(t-s, y-x)$,  $s<t$, $x,y\in\Rd$. The inequality (\ref{ineq:fac}) may be used for verifying that $b\in \pN(\eta,Q,\ap)$. 
\end{rm}
\end{rem}

We complete this section with two examples 
\begin{example}
\begin{rm}
Let $\gamma > 1$. Recall that $b(u,z)=b(z)$ belongs to the Kato class $\pK_{d}^{\gamma -1}$ if
\begin{align*}
\lim_{\varepsilon \to 0} \sup_{x \in \Rd} \int_{|z-x|<\varepsilon} |b(z)| |z-x|^{\gamma -(d+1)}\, dz = 0\,.
\end{align*}
Note that $\pK_d^{\beta - 1} \subset \pK_d^{\alpha -1}$. We have
\begin{align*}
\widehat{\ap}(t,x) &= (t^{-\frac{1}{\alpha}}\land (a^{\beta}t)^{-\frac{1}{\beta}}) \ap (t,x) \\
&\leq c \left( t^{-\frac{1}{\alpha}} p^0 (t,x) + (a^{\beta}t)^{-\frac{1}{\beta}} p_{(\beta)}(a^{\beta}t,x)\right)\,,
\end{align*}
where $c$ is a constant independent of $t>0$ and $x\in \Rd$, $p_{(\beta)}$ is the density function of the isotropic $\beta$-stable L$\rm{\acute{e}}$vy process. Thus, by Remark \ref{rem:fac} and Example 1 in \cite{JS}  we obtain that if $b\in \pK_d^{\beta -1}$, then $b\in \pN(\eta,Q_{\eta}, \ap)$ for any $\eta >0$.
\end{rm}
\end{example}

\begin{example}
\begin{rm}
Let $b(u,z)=b(z)$ be such that $|b(z)|=|z|^{ 1-\alpha +\varepsilon}$, for some $0<\varepsilon<\alpha-\beta$. Then $b\in \pK_{d}^{\alpha -1}$ and $b \notin \pK_{d}^{\beta -1}$. By integrating (\ref{approx:ap}) we get
\begin{align}
\int_0^t \widehat{p^1} (u,x)\, du &\approx \left( \frac{ t^{2-\frac{1}{\alpha}} \land t^{2-\frac{1}{\beta}}}{|x|^{d+
\alpha}} + \frac{t^{2-\frac{1}{\alpha}} \land t^{2-\frac{1}{\beta}}}{|x|^{d+\beta}}\right) \land \left( |x|^{\alpha -(d+1)} \land |x|^{\beta - (d+1)} \right) \nonumber \\
&\leq |x|^{\alpha -(d+1)}\,. \label{ineq:ex2}
\end{align}
Let $\varepsilon>0$ and $0<\delta<1$. We split the integral in (\ref{ineq:fac}) with $a=1$ into two: over $A=\{z\in \Rd : \, |z|< \delta \}$ and $B=\Rd \backslash A$. We choose $\delta$ small enough that by (\ref{ineq:ex2}) the integral over $A$ is less than $\varepsilon/2$ . When integrating over $B$, we use $|z|^{1-\alpha+\varepsilon}\leq \delta^{1-\alpha +\varepsilon}$ and take $h>0$ such that for $t-s<h$ the integral does not exeed $\varepsilon/2$. We have just shown that $b \in \pP(\eta,p^1)$ for any $\eta>0$.
\end{rm}
\end{example}
\section*{Acknowledgment} 
We would like to thank Krzysztof Bogdan for many helpful comments on this paper
\bibliographystyle{abbrv}
\bibliography{tdgp1}

\begin{thebibliography}{10}

\bibitem{BF}
C.~Berg and G.~Forst.
\newblock {\em Potential theory on locally compact abelian groups}.
\newblock Springer-Verlag, New York, 1975.

\bibitem{BB1}
K.~Bogdan and T.~Byczkowski.
\newblock Potential theory for the {$\alpha$}-stable {S}chr\"odinger operator
  on bounded {L}ipschitz domains.
\newblock {\em Studia Math.}, 133(1):53--92, 1999.

\bibitem{BHJ}
K.~Bogdan, W.~Hansen, and T.~Jakubowski.
\newblock Time-dependent {S}chr\"odinger perturbations of transition densities.
\newblock {\em Studia Math.}, 189(3):235--254, 2008.

\bibitem{TJ}
K.~Bogdan and T.~Jakubowski.
\newblock Estimates of the {G}reen function for the fractional {L}aplacian
  perturbed by gradient.
\newblock accepted in Potential Anal. (2011), http://arxiv.org/abs/1009.2472.

\bibitem{BJ}
K.~Bogdan and T.~Jakubowski.
\newblock Estimates of heat kernel of fractional {L}aplacian perturbed by
  gradient operators.
\newblock {\em Comm. Math. Phys.}, 271(1):179--198, 2007.

\bibitem{CKS}
Z.-Q. Chen, P.~Kim, and R.~Song.
\newblock Dirichlet heat kernel estimates for ${\Delta}^{\alpha/2}
  +{\Delta}^{\beta/2}$.
\newblock {\em to appear in Ill J. Math.}

\bibitem{J2}
T.~Jakubowski.
\newblock The estimates of the mean first exit time from a ball for the
  {$\alpha$}-stable {O}rnstein-{U}hlenbeck processes.
\newblock {\em Stochastic Process. Appl.}, 117(10):1540--1560, 2007.

\bibitem{J1}
T.~Jakubowski.
\newblock On {H}arnack inequality for {$\alpha$}-stable {O}rnstein-{U}hlenbeck
  processes.
\newblock {\em Math. Z.}, 258(3):609--628, 2008.

\bibitem{J4}
T.~Jakubowski.
\newblock On combinatorics of {S}chr\"odinger perturbations.
\newblock {\em Potential Anal.}, 31(1):45--55, 2009.

\bibitem{JS}
T.~Jakubowski and K.~Szczypkowski.
\newblock Time-dependent gradient perturbations of fractional {L}aplacian.
\newblock {\em J. Evol. Equ.}, 10(2):319--339, 2010.

\bibitem{MR1783642}
V.~Liskevich and Y.~Semenov.
\newblock Estimates for fundamental solutions of second-order parabolic
  equations.
\newblock {\em J. London Math. Soc. (2)}, 62(2):521--543, 2000.

\bibitem{Y}
K.~Yosida.
\newblock {\em Functional analysis}.
\newblock Classics in Mathematics. Springer-Verlag, Berlin, 1995.

\bibitem{Zhang2}
Q.~S. Zhang.
\newblock Gaussian bounds for the fundamental solutions of {$\nabla (A\nabla
  u)+B\nabla u-u\sb t=0$}.
\newblock {\em Manuscripta Math.}, 93(3):381--390, 1997.

\end{thebibliography}

\end{document}